\newcommand{\gaep}{{\gamma_\eps}}
\newcommand{\om} {\Omega}
\newcommand{\omep}{{\omega_\eps}}
\newcommand{\uep}{{u_\eps}}
\newcommand{\eps}{\varepsilon}
\DeclareMathOperator{\dist}{dist}
\DeclareMathOperator{\SBV}{SBV}
\DeclareMathOperator{\divg}{div}
\newcommand{\field}[1]{\mathbb{#1}}
\newcommand{\R}{\field{R}}
\newcommand{\N}{\field{N}}
\newcommand{\Gammalim}{\Gamma\text{-}\lim}
\newcommand{\Gammaliminf}{\Gamma\text{-}\liminf}
\newcommand{\Gammalimsup}{\Gamma\text{-}\limsup}
\newcommand{\Sp}{\mathbb{S}}
\newcommand{\abs}[1]{\lvert#1\rvert}
\newcommand{\norm}[1]{\lVert#1\rVert}
\newcommand{\set}[1]{\{#1\}}
\newtheorem{theorem}{Theorem}[section]
\newtheorem*{theoremnn}{Theorem}
\newtheorem{lemma}[theorem]{Lemma}
\newtheorem{proposition}{Proposition}
\theoremstyle{definition}
\newtheorem{definition}[theorem]{Definition}
\begin{document}

\title{A variational algorithm for the detection of line segments}

\author[E.~Beretta, M.~Grasmair, M.~Muszkieta and O.~Scherzer]{}

\maketitle

\centerline{\scshape Elena Beretta}
\medskip
{\footnotesize
 \centerline{Dipartimento di Matematica}
   \centerline{Piazzale A. Moro 2}
   \centerline{00183 Roma, Italy}
}

\medskip
\centerline{\scshape Markus Grasmair}
\medskip
{\footnotesize
\centerline{Computational Science Center, University of Vienna}
\centerline{Nordbergstrasse 15, 1090 Wien, Austria}
\centerline{ and }
\centerline{Catholic University Eichst\"att--Ingolstadt}
\centerline{Ostenstrasse 26, 85072 Eichst\"att, Germany}
}
\medskip
\centerline{\scshape Monika Muszkieta}
\medskip
{\footnotesize
\centerline{Institute of Mathematics and Computer Science, 
Wroclaw University of Technology}
\centerline{
ul. Wybrzeze Wyspianskiego 27, 50-370 Wroclaw, Poland}}
\medskip
\centerline{\scshape Otmar Scherzer}
\medskip
{\footnotesize
\centerline{Computational Science Center, University of Vienna}
\centerline{Nordbergstrasse 15, 1090 Wien, Austria}
\centerline{ and }
\centerline{Johann Radon Institute for Computational and Applied Mathematics (RICAM)}
\centerline{Austrian Academy of Sciences}
\centerline{Altenbergerstrasse 69, A-4040 Linz, Austria}
}

\begin{abstract}
  In this paper we propose an algorithm for the detection of edges in
  images that is based on topological asymptotic analysis. Motivated
  from the Mumford--Shah functional, we consider a variational
  functional that penalizes oscillations outside some approximate
  edge set, which we represent as the union of a finite number of thin
  strips, the width of which is an order of magnitude smaller than
  their length. In order to find a near optimal placement of these
  strips, we compute an asymptotic expansion of the functional with
  respect to the strip size. This expansion is then employed for defining
  a (topological) gradient descent like minimization method. As
  opposed to a recently proposed method by some of the authors, which
  uses coverings with balls, the usage of strips includes some
  directional information into the method, which can be used for
  obtaining finer edges and can also result in a reduction of
  computation times.
\end{abstract}

\section{Introduction}

Detection of edges, that is, points in a digital image at which the image intensity
changes sharply is one of the most often performed steps in image
processing. Ideally, the algorithm employed for solving this problem should provide a set of connected curves that indicate the edges of objects.
In a recent paper \cite{GraMusSch11_report}, three of the authors have developed an
iterative algorithm for edge detection using the concept of
topological asymptotic analysis. The basic idea of this approach is to
cover the expected edge set with balls of radius $\eps > 0$ and use
the number of balls, multiplied with $2\eps$, as an estimate for its
length.  It was shown that under certain condition the proposed
variational model approximates 
the~Mumford--Shah functional \cite{MumSha89} in the sense of
$\Gamma$--limits, and, therefore, this algorithm may be considered as
a computational method for the approximate minimization of the
Mumford--Shah functional.
A criterion for the optimal positioning of balls covering the edge set is provided by the leading term of a topological asymptotic expansion of the approximating functional. The (iterative) implementation of the algorithm selects edges successively according to 
certain rules. In a follow up paper \cite{DonGraKanSch12_report}, it
was shown that this approach is useful for scale detection of edges.

In this paper, we consider again the problem of edge detection in the
framework of topological asymptotic analysis. As opposed to the
previous work, however, we consider now covering the edge set with
\emph{line segments} rather than with balls. 
There are several reasons:
First, edges should rather be seen as a union of small line segments
than as accumulations of points. Second, numerically, the resulting
algorithm is expected to be faster, as in each
iteration step a whole set of edge points (the segment) is detected and not a single 
point only. 
We admit here that there is still a conceptual misfit between the continuous formulation and the discrete setting. 
Theoretically, by our analysis, only edge segments can be detected that display a certain distance from the previously detected ones 
(this will be reflected in the constant $\delta_0$ below). We believe that this technical problem can in fact be solved, but it seems that this requires a much more sophisticated analysis of the topological expansion. In fact, for practical realizations, it is not a severe restriction, since the distance can, theoretically, be chosen arbitrarily small, in particular
below half of the pixel size, in which case the union of line segments
appears closed. However, compared to \cite{GraMusSch11_report}
the effect is less pronounced, because the covering line segments are relatively larger than the balls.

The novelty of this paper is an algorithm for edge detection based on
the asymptotic analysis for topological derivatives with respect to
line segments.
We note that the topological asymptotic expansion in
\cite{GraMusSch11_report} has been derived in
the framework of potential theory \cite{VogVol00}. However, in the
present case, due to the more complex geometry of the inhomogeneities
and the impossibility of introducing a uniform scaling, this approach
fails. To avoid these difficulties, in this paper we build up on a geometry independent approach of Capdeboscq \& Vogelius \cite{CapVog03a,CapVog06}. To outline our method,  we have to introduce some notation first.

Let $\Omega$ be an open and bounded subset of $\mathbb{R}^2$. We
assume that a given image $f\colon \Omega\rightarrow \mathbb{R}$ is a
bounded function that assigns to each point $x\in\Omega$ some gray
value $f(x) \in \R$.

\begin{definition}
\label{def:notation}
We denote by
\begin{equation}
\label{eq:sigma_eps}
\sigma_\eps(y,\tau):=\{x\in\mathbb{R}^2 \,:\, x = y+\rho\tau\,, \  -\eps\leq\rho\leq\eps\}
\end{equation}
a line segment of length $2\eps>0$ centered at $y\in \R^2$ and with the unit tangent vector $\tau \in \Sp^1$. 
Moreover, we define a thin strip around $\sigma_\eps(y,\tau)$ as
\begin{equation}
\label{eq:omega_eps}
\omega_\eps(y,\tau) := \{x \in \R^2 \,:\, \dist(x,\sigma_\eps(y,\tau)) \le \eps^2\}\;.
\end{equation}
If $K\subset \Omega$ is a closed subset and $0 < \kappa < 1$, we
define the function $v_{K}\colon\Omega\to\R$ by
\begin{equation}\label{eq:v_K}
v_K(x) := \begin{cases}
\kappa &\text{ if } x \in K,\\
1 & \text{ else.}
\end{cases}
\end{equation}
In particular, we will apply this notion if $K$ is the union of strips
$\omega_\eps(y,\tau)$.
Finally, for every $v \in L^2(\Omega)$ we define 
\begin{equation}
\label{eq:m_eps}
m_{\eps}(v) := \inf\bigl\{\abs{S} : S \subset \R^2\times
\mathbb{S}^1,\ v = v_{K} \text{ with } K = \bigcup_{(y,\tau)\in
  S} \omega_\eps(y,\tau)\bigr\}\;.
\end{equation}
Here we set $m_{\eps}(v) := +\infty$, if $v\neq v_{K}$
for every finite subset $S \subset \R^2 \times \Sp^1$ with $K =
\bigcup_{(y,\tau)} \omega_\eps(y,\tau)$.
\end{definition}
With this notation at hand, we introduce the functional 
\begin{equation}
\label{Jeps}
\mathcal{J}_{\eps}(u,v) := \frac{1}{2}\int_\Omega (u-f)^2\,dx + 
\frac{\alpha}{2}\int_{\Omega} v\abs{\nabla u}^2\,dx 
+ 2\beta\eps\,m_{\eps}(v)\;,
\end{equation}
which is to be minimized over all functions $u\in H^1(\Omega)$ and 
$v \in L^\infty(\Omega)$. Here $\alpha$ and $\beta$ are some positive parameters.

For the approximate numerical minimization of $\mathcal{J}_{\eps}$ we
use a topological asymptotic expansion. Defining 
\[ 
\label{eq:g}
\mathcal{J}(u,v)=\frac{1}{2} \int_\Omega (u-f)^2\,dx + \frac{\alpha}{2} \int_\Omega v \abs{\nabla u}^2\,dx\,,
\] 
we see that for general $\omega_\eps(y,\tau) \cap K = \emptyset$ we have
\[
{\mathcal J}_{\eps}(u,v_{K \cup \omega_\eps(y,\tau)}) - {\mathcal J}_{\eps}(\hat{u},v_K) = 
\mathcal{J}(u,v_{K \cup \omega_\eps(y,\tau)})-\mathcal{J}(\hat{u},v_K)+2 \beta \eps\;.
\] 
Thus the largest decrease of ${\mathcal J}_{\eps}$ with respect to a strip $\omega_\eps(y,\tau)$
can as well be found by optimizing $\mathcal{J}$ with respect to $y$ and $\tau$.
Let now $K$ be some subset of $\Omega$; in particular, it can be the
union of a finite number of thin strips. 
Now assume that we cut out a small strip $\omega_\eps(y,\tau)$ of $\Omega\setminus K$
and denote by $v_K$ and $v_{K\cup\omega_\eps} := v_{K \cup \omega_\eps(y,\tau)}$ the corresponding edge indicators.
Denote moreover by $u_K$ and $u_{K\cup\omega_\eps}$ the minimizers of the functionals
$\mathcal{J}(\cdot,v_K)$ and $\mathcal{J}(\cdot,v_{K \cup \omega_\eps})$, respectively.
Our main result in Section~\ref{sec_functexpan} is the derivation of an expansion of the form
\begin{equation}
\label{eq:approx}
{\mathcal J}(u_{K\cup\omega_\eps},v_{K \cup \omega_\eps}) - {\mathcal J}(u_K,v_K) 
\approx -2\alpha(1-\kappa)\eps^3\mathcal M\nabla u_K(y)\cdot\nabla u_K(y)\,,
\end{equation}
where $\mathcal M=\frac{1}{\kappa}n\otimes n+\tau\otimes\tau$ and $n$, $\tau$ are the unit normal and tangent vectors to the segment 
$\sigma_{\eps}$, and the intersection of $K$ and $\omega_\eps(y,\tau)$ is empty.
The above difference \eqref{eq:approx} is asymptotically valid whenever a strip is removed from the potential edge set
and can be used for finding the 
points of $\Omega$ where we can expect the largest decrease of ${\mathcal J}_{\eps}$ by removing small strips.

\section{Asymptotic expansion\label{sec_functexpan}}

We assume that $\om\subset\mathbb{R}^{2}$ is an open bounded smooth domain and $f:\Omega\rightarrow \mathbb{R}$ is a given function in $L^{\infty}(\Omega)$. We define the functional 
\begin{equation}\label{G}
\mathcal J(u,v):= \frac{1}{2}\int_{\Omega}(u-f)^2\,dx
+\frac{\alpha}{2}\int_{\Omega} v |\nabla u|^2 \,dx\;,
\end{equation}
for $u\in H^1(\om)$ and $v \in L^\infty(\Omega)$, and the parameter  $\alpha>0$.

Now assume that $K$ is a fixed open subset of $\Omega$ and define the function $v\colon\om\rightarrow\R$ by
\begin{equation}\
v(x)=\begin{cases}
\kappa& x\in K,\\
1& x\in\Omega\backslash \overline K,\end{cases}
\end{equation}
with $0<\kappa<1$. Using standard results of calculus of variations, one can show that the unique minimizer $u\in H^1(\Omega)$ of $\mathcal{J}(\cdot,v)$ is the unique weak solution to the boundary value problem
\begin{equation}\label{bv}
\left\{
  \begin{aligned}
  u-\alpha\divg(v\nabla u)       &= f &&\text{in }\Omega,\\
  \dfrac{\partial u}{\partial \nu} &= 0 && \text{on }\partial\Omega\,.
\end{aligned}\right.
\end{equation}
In the remainder of this section, we will derive a variation of the functional $\mathcal{J}$ with respect to perturbation of the function $v$ obtained by adding a small strip to the set $K$.
More precisely, let us denote by $L_0$ a compact subset of $\om\setminus\overline{K}$ such that
\[
\textrm{dist }(L_0,\partial\om\cup \overline K)\geq \delta_0>0.
\]
Let $y\in \text{int}(L_0)$ and $\tau\in \mathbb{S}^1$. We 
choose $\eps>0$ small enough so that
the thin strip~$\omega_{\eps}(y,\tau)$ defined as in (\ref{eq:omega_eps}) is contained in $L_0$.
From now on, in order to simplify the notation, we set
\[
\omega_{\eps}:=\omega_{\eps}(y,\tau)\textrm{ and } \sigma_{\eps}:= \sigma_{\eps}(y,\tau).
\]
We define the function $v_\eps\colon\om\rightarrow\R$ by
\begin{equation}\
v_{\eps}(x)=\begin{cases}
\kappa& x\in K\cup \omega_{\eps},\\
1& x\in\Omega\backslash (\overline K\cup \overline{\omega_{\eps}}).\end{cases}
\end{equation}
Similarly as above, we note that the unique minimizer $\uep \in H^1(\Omega)$ of $\mathcal{J}(\cdot,v_{\eps})$ is the unique weak solution to the boundary value problem
\begin{equation}\label{bvp}
\left\{ \begin{aligned}
  \uep-\alpha\divg (v_{\eps}\nabla \uep) &= f &&\text{in }\Omega,\\
  \dfrac{\partial \uep}{\partial \nu} &= 0 && \text{on }\partial\Omega\,.
\end{aligned}\right.
\end{equation}

Our goal is to establish an expansion for $\mathcal J(\uep,v_{\eps})-\mathcal J(u,v)$ in powers of $\eps$ as $\eps\rightarrow 0$. We will prove the following theorem:

\begin{theorem}
\label{thm:Gasymptotic}
We have
\[
\lim_{\eps\to 0} \max_{y\in L_0} \,\dfrac{1}{4\eps^3}
\left|\mathcal J(\uep, v_{\eps})- \mathcal J(u,v)
-\frac{\alpha(\kappa-1)}{2}4\eps^3\mathcal M\nabla u(y)\cdot\nabla u(y)\right|  = 0\,,
\]
where $\mathcal M=\frac{1}{\kappa}n\otimes n+\tau\otimes\tau$ and $n$, $\tau$ are the unit normal and unit tangent vectors to the segment $\sigma_{\eps}$. 
\end{theorem}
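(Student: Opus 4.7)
My strategy is first to collapse the global energy difference into an integral over the strip $\omep$, and then to carry out a polarization-type analysis on that integral in the spirit of Capdeboscq--Vogelius \cite{CapVog03a,CapVog06}.

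\emph{Step 1: an exact local identity.} I split
\[
\mathcal J(\uep,v_\eps)-\mathcal J(u,v)=\bigl[\mathcal J(u,v_\eps)-\mathcal J(u,v)\bigr]+\bigl[\mathcal J(\uep,v_\eps)-\mathcal J(u,v_\eps)\bigr].
\]
The first bracket equals $\frac{\alpha(\kappa-1)}{2}\int_{\omep}|\nabla u|^2\,dx$, while the second, by minimality of $\uep$ for the convex quadratic $\mathcal J(\cdot,v_\eps)$, equals $-\tfrac12\int_\om(u-\uep)^2\,dx-\tfrac{\alpha}{2}\int_\om v_\eps|\nabla(u-\uep)|^2\,dx$. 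Subtracting the weak forms of \eqref{bv} and \eqref{bvp} and testing with $\uep-u$ gives
\[
\int_\om(\uep-u)^2\,dx+\alpha\int_\om v_\eps|\nabla(\uep-u)|^2\,dx=\alpha(1-\kappa)\int_{\omep}\nabla u\cdot\nabla(\uep-u)\,dx,
\]
and combining these three identities collapses everything into
\[
\mathcal J(\uep,v_\eps)-\mathcal J(u,v) = \frac{\alpha(\kappa-1)}{2}\int_{\omep}\nabla u\cdot\nabla \uep\,dx.
\]
The same identity, together with $v_\eps\ge\kappa$ and Cauchy--Schwarz, yields the a priori bound $\|\nabla\uep\|_{L^2(\omep)}=O(\eps^{3/2})$.

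\emph{Step 2: freezing $\nabla u$ at $y$.} Because $v\equiv 1$ on a $\delta_0$-neighbourhood of $L_0$ and $\partial\om$ is smooth, interior Schauder estimates applied to \eqref{bv} give a $C^1$ bound on $u$ with H\"older-continuous gradient, uniform over $L_0$. Since $\operatorname{diam}(\omep)=O(\eps)$, combining this regularity with the $L^2$-bound on $\nabla\uep$ from Step~1 lets me replace $\nabla u(x)$ by $\nabla u(y)$ inside the strip with an error $o(\eps^3)$, uniform in $y\in L_0$. The theorem is thereby reduced to showing
\[
\nabla u(y)\cdot\int_{\omep}\nabla\uep\,dx = 4\eps^3\,\mathcal M\nabla u(y)\cdot\nabla u(y) + o(\eps^3)
\]
uniformly in $y$.

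\emph{Step 3: polarization on the strip --- the main obstacle.} This is the genuinely hard step and the reason the potential-theoretic approach of \cite{GraMusSch11_report,VogVol00} breaks down here: because the aspect ratio of $\omep$ is $\eps$, no homothety maps $\omep$ onto a fixed reference inclusion. Following the geometry-independent framework of \cite{CapVog03a,CapVog06}, I would establish the corrector estimate
\[
\int_{\omep}\bigl|\nabla\uep-\mathcal M\nabla u(y)\bigr|^2\,dx = o(\eps^3)
\]
by energy methods, comparing $\uep$ with an explicit approximating profile which is constant in the tangential coordinate and piecewise affine across the strip, matching the background $u(y)+\nabla u(y)\cdot(x-y)$ outside $\omep$. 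The interface conditions on $\partial\omep$---continuity of $\uep$ and of the normal flux $v_\eps\,\partial_n\uep$---are precisely what force $\partial_\tau\uep\approx\partial_\tau u(y)$ and $\partial_n\uep\approx\kappa^{-1}\partial_n u(y)$ inside $\omep$, giving birth to the polarization matrix $\mathcal M=\frac{1}{\kappa}n\otimes n+\tau\otimes\tau$. The rounded end-caps of $\omep$ contribute only $O(\eps^4)$ to the area and are absorbed into the remainder, and uniformity in $y\in L_0$ is automatic since the geometry of $\omep(y,\tau)$ is invariant under translations and rotations and all constants depend on $u$ only through its uniform regularity bound on $L_0$. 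Feeding this corrector estimate back into Step~2 via Cauchy--Schwarz yields the claimed limit.
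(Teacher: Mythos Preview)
Your Steps~1 and~2 are correct and close to the paper: the local identity $\mathcal J(\uep,v_\eps)-\mathcal J(u,v)=\tfrac{\alpha(\kappa-1)}{2}\int_{\omep}\nabla u\cdot\nabla\uep\,dx$ is exactly Lemma~\ref{Loc} (derived there by a slightly different manipulation), and the interior regularity of $u$ on $L_0$ is Lemma~\ref{Reg}. Freezing $\nabla u$ at $y$ is also what the paper does in the final assembly.

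The gap is Step~3. You propose to prove the \emph{strong} corrector estimate
\[
\int_{\omep}\bigl|\nabla\uep-\mathcal M\nabla u(y)\bigr|^2\,dx=o(\eps^3),
\]
but the justification offered is heuristic: the interface conditions do \emph{suggest} the one-dimensional transmission profile, yet your ``explicit approximating profile'' (affine outside, piecewise affine across the strip) does not solve either PDE globally, so it cannot be used directly as a competitor in an energy argument for $\uep$. Turning this intuition into an $o(\eps^3)$ bound on the $L^2$ defect inside $\omep$ would require either a matched-asymptotics construction with a global correction, or a localized energy argument with careful control of the transition layers---none of which you carry out. In particular, the Capdeboscq--Vogelius framework you invoke does \emph{not} deliver a strong corrector estimate of this type; it only yields weak (measure) convergence of $|\omep|^{-1}1_{\omep}\nabla\uep$.

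The paper closes the argument differently and avoids the strong estimate altogether. It introduces the global correctors $V_\eps^j$ solving $\divg(\gamma_\eps\nabla V_\eps^j)=0$ with Neumann data $\nu_j$, shows (by the same energy estimates you used) that $|\omep|^{-1}1_{\omep}\partial_i V_\eps^j$ converges in the sense of measures to $\mathcal M_{ij}\delta_y$ for \emph{some} symmetric tensor $\mathcal M$, and then proves (Lemma~\ref{nu}) the identity
\[
|\omep|^{-1}\!\int_{\omep}\nabla u\cdot\nabla V_\eps^j\,\phi\,dx
=|\omep|^{-1}\!\int_{\omep}\nabla\uep\cdot\nabla V^j\,\phi\,dx+o(1),
\]
via a div--curl type algebraic manipulation of the two weak formulations; passing to the limit gives $|\omep|^{-1}\int_{\omep}\nabla\uep\,dx\to\mathcal M\nabla u(y)$. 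The form $\mathcal M=\frac{1}{\kappa}n\otimes n+\tau\otimes\tau$ is then pinned down in Proposition~\ref{eigenvalue}: one computes $\mathcal M\tau\cdot\tau=1$ directly by slicing the corrector $W_\eps^1$ along the tangential direction and using an $L^\infty$ bound, and then the optimal trace inequalities $\operatorname{tr}\mathcal M\le 1+\frac1\kappa$ and $\operatorname{tr}\mathcal M^{-1}\le 1+\kappa$ from \cite{CapVog06} force $\mathcal M n\cdot n=\frac1\kappa$. This route uses only weak convergence plus the sharp trace bounds, and never needs your pointwise picture of $\nabla\uep$ inside the strip.
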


In order to prove Theorem \ref{thm:Gasymptotic} we will follow the approach of Capdeboscq \& Vogelius \cite{CapVog03a,CapVog06}. 
We will need the set
\[
\hat L_0:= L_0+\overline{B_{\delta_0/2}(0)},
\]
which is constructed in such a way that it satisfies
\[
L_0\subset \hat L_0\subset\om\setminus\bar{K} \textrm{ and } \textrm{dist}(\hat L_0,\partial\om\cup \overline K)\geq \delta_0/2,
\]
and several auxiliary lemmas.

\begin{lemma} \label{Loc}
The following identity holds:
\begin{equation}\label{locint}
\mathcal J(\uep, v_{\eps})- \mathcal J(u,v)=\frac{\alpha(\kappa-1)}{2}\int_{\omega_{\eps}}\nabla \uep \cdot \nabla u \,dx.
\end{equation}

\end{lemma}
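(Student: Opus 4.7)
My plan is to derive the identity from two standard uses of the weak Euler--Lagrange equations satisfied by $u$ and $\uep$, combined with the observation that $v_\eps - v$ is supported in $\omega_\eps$.

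First, I would reduce the energy difference to a term involving only $f$. Testing the weak form of \eqref{bv} against $\phi = u$ gives $\int_\Omega u^2\,dx + \alpha\int_\Omega v\abs{\nabla u}^2\,dx = \int_\Omega fu\,dx$, whence expanding the square in $\mathcal J(u,v)$ yields the clean identity $\mathcal J(u,v) = \frac{1}{2}\int_\Omega f(f-u)\,dx$. The same computation applied to \eqref{bvp} gives $\mathcal J(\uep,v_\eps) = \frac{1}{2}\int_\Omega f(f-\uep)\,dx$, so
\[
\mathcal J(\uep,v_\eps) - \mathcal J(u,v) = \tfrac{1}{2}\int_\Omega f(u-\uep)\,dx.
\]

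Second, I would use a symmetric cross-test to rewrite the right-hand side. Testing the equation for $u$ against $\phi=\uep$ and the equation for $\uep$ against $\phi=u$, then subtracting, the quadratic $\int u\uep$ terms cancel and I obtain
\[
\alpha\int_\Omega (v-v_\eps)\,\nabla u\cdot\nabla\uep\,dx = \int_\Omega f(\uep-u)\,dx.
\]
Since $\omega_\eps\cap\overline K = \emptyset$ by construction, $v-v_\eps$ vanishes off $\omega_\eps$ and equals $1-\kappa$ on $\omega_\eps$, so the left-hand side collapses to $\alpha(1-\kappa)\int_{\omega_\eps}\nabla u\cdot\nabla\uep\,dx$.

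Combining the two displays immediately produces \eqref{locint}. There is really no hard step here: once one recognizes the energy identity $\mathcal J(\text{min},v) = \frac{1}{2}\int f(f-\text{min})$, the rest is bookkeeping, and no regularity issue intervenes since $u,\uep\in H^1(\Omega)$ are themselves admissible test functions. The only point to be careful about is the sign of $\kappa-1$ versus $1-\kappa$ when combining the two identities.
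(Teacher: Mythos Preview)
Your proof is correct and follows essentially the same approach as the paper: both combine the self-test (testing each equation against its own solution) with the cross-test (testing each equation against the other solution), and both reduce the energy difference to $-\tfrac{1}{2}\int_\Omega f(u_\eps-u)\,dx$ before invoking the cross-test identity. Your packaging via the clean formula $\mathcal J(u,v)=\tfrac{1}{2}\int_\Omega f(f-u)\,dx$ is slightly more streamlined than the paper's direct manipulation of the difference, but the underlying computation is identical.
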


\begin{proof}[Proof of Lemma \ref{Loc}]
Since $\uep$ and $u$ are weak solutions of \eqref{bv} and  \eqref{bvp} they satisfy, respectively,
\begin{align}\label{bvpw}
\int_{\om}(\uep-f)\phi+\alpha v_{\eps}\nabla \uep\cdot \nabla\phi\,
dx&=0 &&\text{for all } \phi\in H^1(\om),\\
\label{bvw}
\int_{\om}(u-f)\phi+\alpha v\nabla u\cdot \nabla\phi\,
dx&=0 &&\text{for all } \phi\in H^1(\om).
\end{align}
Setting $\phi=u$ in (\ref{bvpw}) and $\phi=\uep$ in (\ref{bvw}) and subtracting (\ref{bvw}) from (\ref{bvpw}) we get
\begin{equation}\label{loc1}
\int_{\om}f(\uep-u)\, dx=\alpha(1-\kappa)\int_{\omega_{\eps}}\nabla\uep\cdot\nabla u\,dx
\end{equation}
On the other hand, inserting $\phi=\uep$ in (\ref{bvpw}) and $\phi=u$ in (\ref{bvw}), we obtain, respectively,
\begin{equation}\label{loc2}
\int_{\om}\uep(\uep-f)+\alpha v_{\eps}|\nabla \uep|^2\, dx=0\,,
\end{equation}
\begin{equation}\label{loc3}
\int_{\om}u(u-f)+\alpha v|\nabla u|^2\, dx=0\,.
\end{equation}
Now
\[
2(\mathcal J(\uep, v_{\eps})-\mathcal J(u,v))=\int_{\om}(\uep-f)^2+\alpha v_{\eps}|\nabla \uep|^2\, dx-\int_{\om}(u-f)^2-\alpha v|\nabla u|^2\, dx
\]
and, by (\ref{loc2}) and (\ref{loc3}), we have
\[
\begin{aligned}
2(\mathcal J(\uep, v_{\eps})-\mathcal J(u,v))&=\int_{\om}(\uep-f)^2-(\uep-f)\uep-(u-f)^2+(u-f)u\, dx\\
&=-\int_{\om}f(\uep-u)\, dx\,.
\end{aligned}
\]
Finally, recalling (\ref{loc1}), the claim follows.
\end{proof}

\begin{lemma} \label{Reg}
The function $u$ satisfies
 \begin{equation}\label{reg1}
u\in C^{1,\lambda}(L_0)
\end{equation}
for every $0 < \lambda < 1$.
Moreover there exists a constant $C=C(\delta_0,\om)$ such that 
\begin{equation}\label{reg2}
\|\nabla u\|_{L^{\infty}(L_0)}\leq C(\|f\|_{H^{-1}(\om)}+\|f\|_{L^{\infty}(\om)})
\end{equation}
\end{lemma}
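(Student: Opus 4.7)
The central observation is that $L_0$ lies at distance at least $\delta_0$ from $\overline K$, so on the enlarged open set $\hat L_0\supset L_0$ (which is still contained in $\om\setminus\overline K$) we have $v\equiv 1$. Consequently, on $\hat L_0$ the function $u$ solves the constant-coefficient elliptic equation
\[
-\alpha\Delta u + u = f,
\]
and the problem reduces to interior elliptic regularity for a Helmholtz-type operator, completely bypassing the jump of $v$ across $\partial K$.

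My plan is the following. First, I would establish a global energy estimate by testing the weak formulation \eqref{bvw} against $\phi = u$; since $v\geq \kappa>0$, this gives
\[
\|u\|_{H^1(\om)} \leq C\,\|f\|_{H^{-1}(\om)},
\]
with $C$ depending only on $\alpha$, $\kappa$ and $\om$. Second, I would bootstrap on $\hat L_0$ using interior Calder\'on--Zygmund estimates applied to $-\alpha\Delta u = f - u$ with a cut-off function supported in $\hat L_0$ and identically $1$ on a neighbourhood of $L_0$; because we are in dimension two, $H^1(\om)\hookrightarrow L^p(\om)$ for every $p<\infty$, so the right-hand side $f-u$ belongs to $L^p(\hat L_0)$ for every finite $p$. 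This yields, for each $p<\infty$,
\[
\|u\|_{W^{2,p}(L_0)} \leq C\bigl(\|f\|_{L^p(\hat L_0)} + \|u\|_{L^p(\hat L_0)}\bigr),
\]
the constant depending on $p$, $\delta_0$, $\alpha$ and $\om$. Third, I would invoke the Sobolev embedding $W^{2,p}(L_0)\hookrightarrow C^{1,\lambda}(L_0)$, valid for $p>2/(1-\lambda)$, to obtain \eqref{reg1}. Finally, bounding $\|f\|_{L^p(\hat L_0)}\leq |\om|^{1/p}\|f\|_{L^{\infty}(\om)}$ and $\|u\|_{L^p(\hat L_0)}\leq C\|u\|_{H^1(\om)}\leq C\|f\|_{H^{-1}(\om)}$ and chaining the estimates gives \eqref{reg2}.

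I do not expect any serious obstacle. The only technical points are to ensure that the cut-off is chosen so that the constants depend only on $\delta_0$ and $\om$ (this is standard since $\dist(L_0,\partial\hat L_0)\geq \delta_0/2$), and to pick $p$ large enough for the Sobolev embedding to land in $C^{1,\lambda}$. Smoothness of $\partial\om$ is used only for the global $H^1$ bound with Neumann data, which is available under the assumptions of the section.
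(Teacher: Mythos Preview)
Your proposal is correct and follows essentially the same route as the paper: reduce to the constant-coefficient equation $-\alpha\Delta u + u = f$ on a neighbourhood of $L_0$, apply interior $W^{2,p}$ estimates with a cut-off, and finish with the Sobolev embedding $W^{2,p}\hookrightarrow C^{1,\lambda}$. The only minor difference is that the paper asserts $u-f$ is bounded (implicitly invoking an $L^\infty$ bound on $u$) to place the right-hand side in $L^p$, whereas you use the two-dimensional embedding $H^1(\om)\hookrightarrow L^p(\om)$ together with the global energy estimate; your version is arguably cleaner since it makes the $H^{-1}$-dependence explicit from the start.
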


\begin{proof}[Proof of Lemma \ref{Reg}]
First we observe that, in $\om\backslash \overline K$, $u$ solves 
\[
u-\alpha\Delta u=f.
\]
Now, let $\tilde{x}\in L_0$ and let $\varphi\in C_0^{\infty}(\Omega)$
be a function with a compact support in $L_0$ such that
$\varphi(x)\equiv 1$ in a neighborhood $U$ of $\tilde{x}$. Since
$u-f$ is bounded, we get that $w = (u-f)\varphi \in L^p(L_0)$ for all
$p<+\infty$, and therefore, $\Delta^{-1} w \in W^{2,p}(L_0)$ for all
$p<+\infty$.  In particular we have that $u \in W^{2,p}(U)$ for all
$p<+\infty$. From the Sobolev imbedding theorem and
since $\tilde{x} \in L_0$ is arbitrary, we conclude that $u\in C^{1,\lambda}(L_0)$ with $\lambda\in(0,1-2/p)$ for $2<p<+\infty$.
Moreover from \cite{LadUra68b}  we have
\[
\begin{aligned}
\|u\|_{W^{2,p}( L_0)}&\leq C(\|u\|_{L^{p}(\om)}+\|f\|_{L^{p}(\om)})\\
&\leq C(\|f\|_{H^{-1}(\om)}+\|f\|_{L^{\infty}(\om)})
\end{aligned}
\]
for any $p>2$ and where $C$ depends on $\delta_0,\om$. Finally the Sobolev imbedding theorem  implies (\ref{reg2}).
\end{proof}

We now derive energy estimates for $\uep-u$.

\begin{lemma} \label{EnEst}
There exists a constant $C=C(\kappa,\delta_0)$ such that 
\begin{equation}\label{energy1}
\|u_{\eps}-u\|_{H^1(\om)}\leq C(\|f\|_{H^{-1}(\om)}+\|f\|_{L^{\infty}(\om)})|\omega_{\eps}|^{\frac{1}{2}}
\end{equation}
and 
\begin{equation}\label{energy2}
\|u_{\eps}-u\|_{L^2(\om)}\leq C(\|f\|_{H^{-1}(\om)}+\|f\|_{L^{\infty}(\om)})|\omega_{\eps}|^{\frac{1}{2}+\eta}
\end{equation}
 for some $\eta>0$.

\end{lemma}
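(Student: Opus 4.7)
The plan is to first derive a single equation for the difference $w_\eps := \uep - u$. Subtracting \eqref{bvw} from \eqref{bvpw}, decomposing $v_\eps \nabla \uep - v\nabla u = v_\eps\nabla w_\eps + (v_\eps - v)\nabla u$, and observing that under the hypothesis $\omep\cap K=\emptyset$ one has $v_\eps - v = -(1-\kappa)\chi_{\omep}$, I obtain
\[
\int_\om w_\eps\,\phi + \alpha v_\eps \nabla w_\eps \cdot \nabla \phi\,dx = \alpha(1-\kappa)\int_{\omep} \nabla u \cdot \nabla \phi\,dx
\]
for all $\phi\in H^1(\om)$. Both estimates follow from this identity combined with the $L^\infty$ gradient bound \eqref{reg2}.

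For \eqref{energy1}, I would plug $\phi = w_\eps$ into this relation. Bounding $v_\eps \ge \kappa$ on the left hand side and applying Cauchy--Schwarz together with Young's inequality on the right to absorb the gradient term, I arrive at
\[
\|w_\eps\|_{L^2(\om)}^2 + \frac{\alpha\kappa}{2}\|\nabla w_\eps\|_{L^2(\om)}^2 \le \frac{\alpha(1-\kappa)^2}{2\kappa}\int_{\omep}|\nabla u|^2\,dx.
\]
Since $\omep\subset L_0$, the $L^\infty$ bound \eqref{reg2} turns the right hand side into $C(\kappa,\delta_0)(\|f\|_{H^{-1}(\om)}+\|f\|_{L^\infty(\om)})^2|\omep|$, which proves \eqref{energy1}.

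To upgrade this to the improved $L^2$ estimate \eqref{energy2}, I would run a duality (Aubin--Nitsche type) argument. For arbitrary $g \in L^2(\om)$, let $\psi_g \in H^1(\om)$ be the weak solution of the (self-adjoint) problem $\psi_g - \alpha \divg(v_\eps \nabla\psi_g) = g$ with homogeneous Neumann data. Testing this equation with $w_\eps$ and the equation for $w_\eps$ with $\phi=\psi_g$, the two left hand sides coincide and yield
\[
\int_\om g\, w_\eps\,dx = \alpha(1-\kappa)\int_{\omep}\nabla u \cdot \nabla \psi_g\,dx.
\]
Rewriting the adjoint equation as $-\divg(v_\eps\nabla\psi_g) = (g-\psi_g)/\alpha$ and applying Meyers' higher integrability theorem, there exists $p>2$ such that $\|\nabla\psi_g\|_{L^p(\om)}\le C(\kappa,\om)\|g\|_{L^2(\om)}$. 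Hölder's inequality on $\omep$ then gives
\[
\Bigl|\int_{\omep}\nabla u \cdot \nabla\psi_g\,dx\Bigr| \le \|\nabla u\|_{L^\infty(L_0)}\,|\omep|^{1-1/p}\,\|\nabla\psi_g\|_{L^p(\omep)}.
\]
Taking the supremum over $g$ with $\|g\|_{L^2(\om)}\le 1$ and invoking \eqref{reg2} produces \eqref{energy2} with $\eta := 1/2 - 1/p > 0$.

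The main technical point is the uniform-in-$\eps$ Meyers estimate: I need the exponent $p>2$ to be independent of $\eps$. What makes this work here is that $v_\eps$ takes only the two values $\kappa$ and $1$, so the ellipticity constants are fixed independently of the strip placement and of $\eps$; consequently the Meyers exponent $p$ depends only on $\kappa$ (and on $\om$), and so does $\eta$.
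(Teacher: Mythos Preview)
Your proof of \eqref{energy1} is essentially the same as the paper's. For \eqref{energy2} your argument is correct but takes a genuinely different route. The paper writes the difference equation using the \emph{unperturbed} coefficient $v$ on the left, which places $\nabla u_\eps$ (rather than $\nabla u$) in the $\omega_\eps$-integral on the right; the dual problem is then
\[
w-\alpha\,\divg(v\nabla w)=u-u_\eps \quad\text{in }\Omega,\qquad \partial_\nu w=0 \quad\text{on }\partial\Omega.
\]
Since $v\equiv 1$ on $\hat L_0\supset\omega_\eps$, this reduces locally to $w-\alpha\Delta w=u-u_\eps$, and interior $H^2$ regularity plus Sobolev embedding yields $\nabla w\in L^p(\hat L_0)$ for \emph{every} $p<\infty$ with $\|\nabla w\|_{L^p(\hat L_0)}\le C\|u-u_\eps\|_{L^2(\Omega)}$. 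One then pairs with $\|\nabla u_\eps\|_{L^q(\omega_\eps)}$ for $q<2$ and concludes via H\"older. You instead keep $v_\eps$ in the dual problem and appeal to Meyers' higher integrability to get $\nabla\psi_g\in L^p(\Omega)$ for some fixed $p>2$. Both are valid: the paper's route is more elementary (no Meyers) and exploits the $\delta_0$-separation directly to obtain $\eta$ arbitrarily close to $\tfrac12$; your route is a clean global duality that does not need the separation for the regularity step (only for the $L^\infty$ bound on $\nabla u$), at the price of citing Meyers and obtaining only the specific $\eta=\tfrac12-\tfrac1p$ tied to the Meyers exponent. For the lemma as stated either is sufficient.
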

\begin{proof}[Proof of Lemma \ref{EnEst}]
Subtracting (\ref{bvw}) from (\ref{bvpw}) we get
\[
\int_{\om}(\uep-u)\phi+\alpha v_{\eps}\nabla (\uep-u)\cdot \nabla\phi\, dx=\alpha(1-\kappa)\int_{\omega_{\eps}}\nabla u\cdot\nabla\phi\, dx \quad \forall \phi\in H^1(\om)\,.
\]
Setting $\phi=\uep-u$, inserting it in the last equality and applying Schwarz' inequality, we get
\[
\alpha\kappa \|\uep-u\|^2_{H^1(\om)}\leq\alpha (1-\kappa)\|\nabla u\|_{L^2(\omega_{\eps})}\|\uep-u\|_{H^1(\om)}.
\]
Hence
\[
\|\uep-u\|_{H^1(\om)}\leq \frac{1-\kappa}{\kappa}\|\nabla u\|_{L^2(\omega_{\eps})}
\]
and by Schwarz' inequality and the regularity estimates proved in Lemma \ref{Reg} for $u$ we derive
\[
\|\uep-u\|_{H^1(\om)}\leq C(\|f\|_{H^{-1}(\om)}+\|f\|_{L^{\infty}(\om)})|\omega_{\eps}|^{\frac{1}{2}}.
\]
To prove (\ref{energy2}) we subtract (\ref{bvpw}) from (\ref{bvw}) getting
\begin{equation}\label{en1}
\int_{\om}
(u-\uep)w+\alpha v\nabla(u-\uep)\cdot\nabla w\, dx=\alpha(\kappa-1)\int_{\omep}\nabla\uep\cdot\nabla w\, dx\quad\forall w\in H^1(\om).
\end{equation}
Let $w\in H^1(\om)$ be the solution to
\begin{equation}\label{auxp}
\left\{ \begin{aligned}
  w-\alpha\divg (v\nabla w) &= u-\uep &&\text{in }\Omega,\\
  \dfrac{\partial w}{\partial \nu} &= 0 && \text{on }\partial\Omega\,.
\end{aligned}\right.
\end{equation}

Since $w-\alpha\Delta w=u-\uep$ in $\hat L_0$, by interior regularity
results (cf.~\cite[Thm.~8.8]{GilTru01}) we have 
\[
\|w\|_{H^2(\hat L_0)}\leq C(\|u-\uep\|_{L^2(\om)}+\|w\|_{H^1(\om)}).
\]
Moreover since
\[
\|w\|_{H^1(\om)}\leq C \|u-\uep\|_{L^2(\om)}
\]
we have that 
\[
\|w\|_{H^2(\hat L_0)}\leq C\|u-\uep\|_{L^2(\om)}.
\]
By the Sobolev imbedding theorem, the last inequality implies that $\nabla w\in L^p(\hat L_0)$ for any $p\in (1,+\infty)$ and 
\[
\|\nabla w\|_{L^p(\hat L_0)}\leq C\|u-\uep\|_{L^2(\om)}.
\]
Let us now choose $q\in (1,2)$ and $p$ so that
$\frac{1}{p}+\frac{1}{q}=1$. Then, combining the variational
formulation of the problem (\ref{auxp}) with (\ref{en1}) and applying
H\"older's inequality we get
\begin{equation}
\begin{aligned}
\int_{\om}(u-\uep)^2\, dx&=\alpha(\kappa-1)\int_{\omep}\nabla\uep\cdot\nabla w\, dx\\
&\leq C\|\nabla\uep\|_{L^q(\omep)}\|\nabla w\|_{L^p(\omep)}\\
&\leq C\|\nabla\uep\|_{L^q(\omep)}\|u-\uep\|_{L^2(\om)}
\end{aligned}
\end{equation}
and since $\frac{1}{q}>\frac{1}{2}$ the claim follows.
\end{proof}
We recall here a general, geometry independent, result due to Capdeboscq \& Vogelius (cf. \cite{CapVog03a,CapVog06}).  
Let us indicate with $V^j:=x_j-\frac{1}{|\partial\om|}\int_{\partial\om}x_j\, d\sigma$, $j=1,2$, the so called corrector terms. Let 
\begin{equation}\
\gamma_{\eps}(x)=\begin{cases}
\kappa& x\in \omega_{\eps},\\
1& x\in\Omega\backslash \overline{\omega_{\eps}},\end{cases}
\end{equation}
and let $V_{\eps}^j$, $j=1,2$, be the solutions to

\begin{equation}
\left\{ \begin{array}{rcll}
  \divg (\gaep \nabla V_\eps^{j}) &=& 0 &\text{in }\Omega\,,\\[0.1cm]
  \dfrac{\partial V_{\eps}^{j}}{\partial \nu} &=& \nu_{j} & \text{on }\partial\Omega\,,\\[0.2cm]
\int_{\partial\om}V_{\eps}^{j}\, d\sigma &=& 0  \,.
\end{array}\right.
\end{equation}

Proceeding with similar arguments as in Lemma \ref{EnEst} one easily sees that
there exists a constant $C=C(\kappa,\delta_0)$ such that 
\begin{equation}\label{energy3}
\|V^j_{\eps}-V^j\|_{H^1(\om)}\leq C|\omega_{\eps}|^{\frac{1}{2}}
\end{equation}
and 
\begin{equation}\label{energy4}
\|V^j_{\eps}-V^j\|_{L^2(\om)}\leq C|\omega_{\eps}|^{\frac{1}{2}+\eta}
\end{equation}
for $ j=1,2$ and for some $\eta>0$.
Observe now that, as $\eps\rightarrow 0$,
\begin{equation}\label{mu}
\left|\omep\right|^{-1}1_{\omep}(\cdot)\textrm{ converges in the sense of measure to }
\mu\end{equation}
and the  Borel measure $\mu$ is concentrated on $L_0$. In fact, due to
the form of the set $\omep$,  it is immediate to see that
$\mu=\delta_y$, 
where $\delta_y$ denotes the Dirac measure concentrated at $y$. 
Using (\ref{energy3})  and from the analysis in \cite{CapVog03a} it
follows that, possibly up to the extraction of a subsequence, 
\begin{equation}\label{M}
\left|\omep\right|^{-1}1_{\omep}\frac{\partial V^j_{\eps}}{\partial x_i}(\cdot)\textrm{ converges in the sense of measure to }
\mathcal{M}_{ij}\textrm{ when }\eps\rightarrow 0\,,\end{equation}
where $\mathcal{M}_{ij}$ is a Borel measure with support in
$L_0$.
Again, the fact that the set $\omega_\eps$ shrinks to the point $y$
implies that the measure $\mathcal{M}_{ij}$ is simply a multiple of
$\delta_y$.
 Hence, identifying $\mathcal{M}_{ij}$ with
 $\mathcal{M}_{ij}\delta_y$, we have
\begin{equation}\label{defM1}
\mathcal{M}_{ij} \phi(y)
=\lim_{\eps\rightarrow0}\dfrac{1}{|\omep|}\int_{\omep}\dfrac{\partial V_{\eps}^{j}}{\partial x_{i}}(x)\phi(x)\, dx
\end{equation}
for every smooth function $\phi$.
Following \cite{CapVog03a} it is possible to show the following result:

\begin{lemma} \label{nu}
Let $\eps\rightarrow 0$ be such that (\ref{mu}) and (\ref{M}) hold. Then  
\begin{equation}\label{nu1}
\lim_{\eps\rightarrow0} \left|\omep\right|^{-1}\int_{\om}1_{\omep}\frac{\partial
  u_{\eps}}{\partial x_j}\phi\, dx =
\frac{\partial u}{\partial x_i}(y) \mathcal{M}_{ij} \phi(y)
\quad \forall \phi\in C_0^1( \hat L_0)
\end{equation}
for any $j=1,2$.
\end{lemma}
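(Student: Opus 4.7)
The plan is to follow the corrector-ansatz strategy of Capdeboscq--Vogelius~\cite{CapVog03a}: near $y$ one approximates $\uep$ by $u + \sum_k \partial_k u(y)(V_\eps^k - V^k)$, so that the residual
\[
r_\eps := \uep - u - \sum_{k=1}^{2}\partial_k u(y)\,(V_\eps^k - V^k)
\]
is strictly smaller than the differences controlled by Lemma~\ref{EnEst}. Since $\nabla V^k = e_k$, one decomposes
\[
|\omep|^{-1}\int_{\omep}\phi\,\partial_j \uep\,dx = A_\eps + B_\eps + C_\eps,
\]
where $A_\eps = |\omep|^{-1}\int_{\omep}\phi(\partial_j u-\partial_j u(y))\,dx$, $B_\eps = \partial_k u(y)\,|\omep|^{-1}\int_{\omep}\phi\,\partial_j V_\eps^k\,dx$, and $C_\eps = |\omep|^{-1}\int_{\omep}\phi\,\partial_j r_\eps\,dx$.

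The term $A_\eps$ vanishes in the limit because $\nabla u\in C^{0,\lambda}(L_0)$ by Lemma~\ref{Reg} and $\omep$ shrinks to $y$, so $\sup_{x\in\omep}|\partial_j u(x)-\partial_j u(y)|\to 0$. The term $B_\eps$ converges to $\partial_k u(y)\,\mathcal M_{jk}\,\phi(y)$ by the identification~\eqref{defM1}, which---after renaming the summation index and using that the polarization tensor $\mathcal M=\kappa^{-1}n\otimes n+\tau\otimes\tau$ is symmetric---matches the right-hand side of the lemma.

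The main obstacle is therefore to show $C_\eps\to 0$. By Cauchy--Schwarz this reduces to the improved bound $\|\nabla r_\eps\|_{L^2(\om)} = o(|\omep|^{1/2})$, which is strictly better than the basic estimate of Lemma~\ref{EnEst}. Combining the Euler--Lagrange equations for $\uep$ and $u$ with the identity $\divg(\gaep\nabla V_\eps^k)=0$, and using that $v_\eps - v = \gaep - 1$ because $\omep\cap K=\emptyset$, one arrives on $\hat L_0$ (where $v_\eps=\gaep$ and $v=1$) at the equation
\[
r_\eps - \alpha\,\divg\!\bigl(\gaep\nabla r_\eps\bigr) = -\alpha(1-\kappa)\,\divg\!\bigl(1_{\omep}(\nabla u - \nabla u(y))\bigr) - \sum_k \partial_k u(y)\,(V_\eps^k - V^k).
\]
The first source term is small in $H^{-1}$ thanks to the H\"older continuity of $\nabla u$, yielding $\|\nabla u-\nabla u(y)\|_{L^2(\omep)}\le C\eps^\lambda|\omep|^{1/2}$, while the second is small in $L^2$ by the refined bound~\eqref{energy4}. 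Testing against $r_\eps$ (together with the homogeneous Neumann boundary condition inherited from both $u_\eps-u$ and $V_\eps^k-V^k$) then produces the required improved estimate, and hence $C_\eps\to 0$. The decisive point is that the leading $O(|\omep|^{1/2})$ part of $\|\nabla(\uep-u)\|_{L^2}$ has been cancelled by the corrector combination $\sum_k\partial_k u(y)(V_\eps^k-V^k)$, which replaces $\nabla u$ by $\nabla u-\nabla u(y)$ in the source and so gives the extra power of $\eps$ that makes the argument close.
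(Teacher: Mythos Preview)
Your approach is correct in spirit but genuinely different from the paper's, and it has one technical gap that you should close.

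\textbf{How the paper argues.} The paper does not introduce the residual $r_\eps$ at all. Instead it proves directly the ``compensated'' identity
\[
|\omep|^{-1}\int_{\omep}\nabla u\cdot\nabla V^j_{\eps}\,\phi\,dx
=|\omep|^{-1}\int_{\omep}\nabla \uep\cdot\nabla V^j\,\phi\,dx+o(1),
\qquad \phi\in C^1_0(\hat L_0),
\]
by a purely algebraic manipulation of the four weak formulations (for $u$, $\uep$, $V^j$, $V^j_\eps$), always tested against functions with support in $\hat L_0$. The point of restricting to $\phi\in C^1_0(\hat L_0)$ is that there $\phi v_\eps=\phi\gaep$ and $\phi v=\phi$, so the coefficient mismatch on $K$ never enters. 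After repeated integration by parts the difference of the two sides becomes a sum of integrals each containing a product of two small quantities (combinations of $u_\eps-u$ and $V^j_\eps-V^j$), and these are $o(|\omep|)$ by Lemma~\ref{EnEst} and \eqref{energy3}--\eqref{energy4}. Since $\nabla V^j=e_j$, the right-hand side is exactly the quantity in~\eqref{nu1}; the left-hand side converges to $\partial_i u(y)\,\mathcal M_{ij}\,\phi(y)$ by~\eqref{defM1} and the continuity of $\nabla u$.

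\textbf{What you do differently.} Your corrector-ansatz decomposition $A_\eps+B_\eps+C_\eps$ is more constructive and arguably more transparent: it makes explicit that the leading part of $\nabla\uep$ on $\omep$ is $\partial_k u(y)\,\nabla V^k_\eps$. The paper's route trades this intuition for a computation that stays local to $\hat L_0$ and thereby avoids ever comparing $v_\eps$ with $\gaep$ outside $\hat L_0$.

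\textbf{The gap.} You write the equation for $r_\eps$ only on $\hat L_0$ (where $v_\eps=\gaep$) but then invoke the Neumann condition on $\partial\Omega$, which means you are testing on all of $\Omega$. On $\Omega$ the equation for $r_\eps$ acquires an extra source term supported on $K$, namely
\[
\alpha(\kappa-1)\,\divg\Bigl(1_K\sum_k\partial_k u(y)\,\nabla(V^k_\eps-V^k)\Bigr),
\]
because the corrector equation uses $\gaep$ while $\uep-u$ is governed by $v_\eps$, and $v_\eps-\gaep=(\kappa-1)1_K$. When you test against $r_\eps$ this contributes a term bounded by $C\,\|\nabla(V^k_\eps-V^k)\|_{L^2(K)}\,\|\nabla r_\eps\|_{L^2(\Omega)}$. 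To absorb it you need $\|\nabla(V^k_\eps-V^k)\|_{L^2(K)}=o(|\omep|^{1/2})$. This does hold: $V^k_\eps-V^k$ is harmonic on $\Omega\setminus\overline{\omep}$ with homogeneous Neumann data on $\partial\Omega$, and $K$ lies at distance $\ge\delta_0$ from $\omep$, so an interior/boundary Caccioppoli estimate gives $\|\nabla(V^k_\eps-V^k)\|_{L^2(K)}\le C\,\|V^k_\eps-V^k\|_{L^2(\Omega)}\le C\,|\omep|^{1/2+\eta}$ by~\eqref{energy4}. Once you add this sentence, your energy argument closes and yields $\|r_\eps\|_{H^1(\Omega)}=o(|\omep|^{1/2})$ as claimed.
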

\begin{proof}[Proof of Lemma \ref{nu}]
From the energy estimates we have that, possibly extracting a subsequence that we do not relabel, 
\begin{equation}\label{nu2}
\left|\omep\right|^{-1}1_{\omep}\frac{\partial u_{\eps}}{\partial x_j}(\cdot)\textrm{ converges in the sense of measure to }\bar{\nu}_j,
\end{equation}
that is,
\[
\lim_{\eps\rightarrow 0}|\omep|^{-1}\int_{\om}\frac{\partial u_{\eps}}{\partial x_j}\phi\, dx=\int_{\om}\phi\, d\bar{\nu}_j
\]
for all continuous function $\phi$ in $\om$. 
In order to prove (\ref{nu1}) we will prove the following relation for $\eps\rightarrow 0$
\begin{equation}\label{nuM}
|\omep|^{-1}\int_{\omep}\nabla u\cdot\nabla V^j_{\eps}\phi\, dx=|\omep|^{-1}\int_{\omep}\nabla \uep\cdot\nabla V^j\phi\, dx +o(1),\quad \forall \phi\in C^1_0(\hat L_0).
\end{equation}
Once (\ref{nuM}) is proved, passing to the limit as $\eps\rightarrow 0$, we get 
\[
\int_{\om}\phi \, d\bar{\nu}_j=
\frac{\partial u}{\partial x_i}(y) \mathcal{M}_{ij} \phi(y)
\quad\forall \phi\in C^1_0(\hat L_0),
\]
from which (\ref{nu1}) follows.

 Hence let us prove (\ref{nuM}).
Let us notice that, if $\phi\in C^1_0(\hat L_0)$, 
then $\phi v_{\eps}=\phi\gamma_{\eps}$ and $\phi v=\phi \gamma_0=\phi$, and we have
\begin{equation}\label{nu3}
\int_{\om}(\uep-f)\phi+\alpha\gamma_{\eps}\nabla\uep\cdot\nabla\phi\, dx=\int_{\om}(u-f)\phi+\alpha
\nabla u\cdot\nabla\phi\, dx
\end{equation}
and
\begin{equation}\label{nu4}
\int_{\om}\gamma_{\eps}\nabla V^j_{\eps}\cdot\nabla\phi\,
dx=\int_{\om}\nabla V^j\cdot\nabla\phi\, dx.
\end{equation}
Using (\ref{nu3}) and (\ref{nu4}) and after some algebraic manipulations we get
\begin{align*}
\lefteqn{\alpha\int_{\om}(1-\gamma_{\eps})\nabla u\cdot\nabla V^j_{\eps}\phi\, dx-\alpha\int_{\om}(1-\gamma_{\eps})\nabla\uep\cdot\nabla V^j\phi\, dx}\\
&=\alpha\int_{\om}\nabla u\cdot\nabla(V^j_{\eps}\phi)-\gamma_{\eps}\nabla V^j_{\eps}\cdot \nabla (U\phi)\, dx
-\alpha\int_{\om}(\nabla V^j\cdot\nabla(u_{\eps}\phi)-\gamma_{\eps}\nabla u_{\eps}\cdot \nabla (V^j\phi))\, dx\\
&\quad{}-\alpha\int_{\om}(\nabla u\cdot V^j_{\eps}\nabla\phi-\gamma_{\eps}\nabla V^j_{\eps}\cdot u\nabla\phi)\, dx
+\alpha\int_{\om}(\nabla V^j\cdot u_{\eps}\nabla\phi-\gamma_{\eps}\nabla u_{\eps}\cdot V^j\nabla \phi)\, dx\\
&=\alpha \int_{\om}(\gaep\nabla\uep\cdot\nabla(V^j_{\eps}\phi)
-\nabla V^j\cdot\nabla (u\phi))\, dx
+\int_{\om}(\uep-u) V^j_{\eps}\phi\,dx\\
&\quad{}-\alpha\int_{\om}(\gaep\nabla V_{\eps}^j\cdot\nabla(\uep\phi)-\nabla u\cdot\nabla(V^j\phi))\,dx-\int_{\om}(\uep-u)V^j\phi\,dx\\
&\quad{}-\alpha\int_{\om}(\nabla u\cdot V^j_{\eps}\nabla\phi-\gamma_{\eps}\nabla V^j_{\eps}\cdot u\nabla\phi)\, dx
+\alpha\int_{\om}(\nabla V^j\cdot u_{\eps}\nabla\phi-\gamma_{\eps}\nabla u_{\eps}\cdot V^j\nabla \phi)\, dx\\
&=\int_{\om}(\uep-u)(V^j_{\eps}-V^j)\phi\, dx+\alpha\int_{\om}
(\gaep\nabla\uep\cdot V^j_{\eps}\nabla\phi-\nabla V^j\cdot u\nabla\phi)\, dx\\
&\quad{}-\alpha\int_{\om}(\gaep\nabla V^j_{\eps}\cdot\uep\nabla\phi-\nabla u V^j\cdot \nabla\phi)\, dx
-\alpha \int_{\om}(\nabla u\cdot V^j_{\eps}\nabla\phi-\gaep\nabla V_{\eps}^j\cdot u\nabla\phi)\, dx\\
&\quad{}+\alpha\int_{\om}(\nabla V^j\cdot \uep\nabla\phi-\gaep\nabla \uep\cdot V^j\nabla\phi)\, dx\\
&=\int_{\om}(\uep-u)(V^j_{\eps}-V^j)\phi\, dx
+\alpha\int_{\om}\nabla V^j\cdot(\uep-u)\nabla\phi\, dx
-\alpha\int_{\om}\nabla u\cdot(V^j_{\eps}-V^j)\nabla\phi\, dx\\
&\quad{}-\alpha\int_{\om}\gaep\nabla V^j_{\eps}\cdot(\uep-u)\nabla\phi\, dx
+\alpha\int_{\om}\gaep\nabla \uep\cdot(V^j_{\eps}-V^j)\nabla\phi\, dx\\
&=\int_{\om}(\uep-u)(V^j_{\eps}-V^j)\phi\, dx+\alpha\int_{\om}\gaep\nabla (u_{\eps}-u)\cdot (V^j_{\eps}-V^j)\nabla\phi \, dx \\[-0.1cm]
&\quad{}-\alpha\int_{\om}\gaep\nabla (V^j_{\eps}-V^j)\cdot (u_{\eps}-u)\nabla\phi \, dx
+\alpha\int_{\omep}(\kappa-1)\nabla u\cdot(V^j_{\eps}-V^j)\nabla\phi \, dx\\
&\quad{} -\alpha\int_{\omep}(\kappa-1)\nabla V^j\cdot(\uep-u) \nabla\phi \, dx\,.
\end{align*}
Now, by Lemma \ref{EnEst},  (\ref{energy3}),  (\ref{energy4}), Schwarz inequality and using finally the regularity of $u$ and of $V^j$ we get (\ref{nuM}) and the claim follows.
\end{proof} 
We now state several properties of the polarization tensor $\mathcal{M}$ established in \cite{CapVog03a,CapVog06} that we will use in the sequel. 
From the definition of the tensor $\mathcal{M}$  given in \cite{BerCapGouFra09}, it is easy to see that it is symmetric and satisfies
\begin{equation}\label{bound1}
|\xi|^2\leq \mathcal{M}\xi\cdot\xi\leq \frac{1}{\kappa}|\xi|^2
\end{equation}
for any $\xi\in\mathbb{R}^2$.
Moreover
\begin{equation}\label{bound2}
\textrm{ tr }\mathcal{M}\leq 1+\frac{1}{\kappa}\,,
\end{equation}
\begin{equation}\label{bound3}
\textrm{ tr }\mathcal{M}^{-1}\leq 1+\kappa\,.
\end{equation}
Furthermore, in the case of constant coefficients by insertion of $\phi=\xi_i\xi_j$ in (\ref{defM1}) we get
\begin{equation}\label{bound4}
\mathcal{M}_{ij}\xi_i\xi_j=|\omep|^{-1}\int_{\omep}\nabla V_{\eps}\cdot\xi\, dx+o(1)=|\omep|^{-1}\int_{\omep}\nabla V_{\eps}\cdot\nabla V\, dx+o(1)\,,
\end{equation}
where $V_{\eps}=V_{\eps}^i\xi_i$ and $V=V^i\xi_i$. Hence, we can write
\begin{equation}\label{bound5}
\mathcal{M}\xi\cdot\xi=|\xi|^2+|\omep|^{-1}\int_{\omep}\nabla W_{\eps}\cdot\xi\, dx+o(1)\,,
\end{equation}
where $W_{\eps}=V_{\eps}-V$ is the solution to
\begin{equation}
\label{corr}
\left\{ \begin{aligned}
  \divg (\gaep \nabla W_\eps) &= \divg ((1-\kappa)1_{\omep}\xi) &&\text{in }\Omega\,,\\[0.1cm]
  \gaep \dfrac{\partial W_{\eps}}{\partial \nu} &= 0 && \text{on }\partial\Omega\,.
\end{aligned}\right.
\end{equation}

We are now ready to prove the following result:
\begin{proposition}\label{eigenvalue}
 We have
 \begin{equation}\label{eig}
   \mathcal{M} \tau\cdot \tau = 1,
   \qquad
   \mathcal{M} n \cdot n = \frac{1}{\kappa}.
 \end{equation}
\end{proposition}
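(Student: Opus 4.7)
The plan is to apply formula~(\ref{bound5}), which reduces the computation of $\mathcal{M}\xi\cdot\xi$ to evaluating $\abs{\xi}^2+\lim_{\eps\to 0}|\omep|^{-1}\int_{\omep}\nabla W_\eps\cdot\xi\,dx$, where $W_\eps$ solves the corrector problem~(\ref{corr}). I will show that this limit equals $0$ when $\xi=\tau$ and $1/\kappa-1$ when $\xi=n$, from which the two identities of the proposition follow upon adding $\abs{\xi}^2=1$.

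The driving idea is that $\omep$ has length $2\eps$ but width only $2\eps^2$, so its aspect ratio diverges as $\eps\to 0$. Under the anisotropic rescaling $\tilde x=(x-y)/\eps^2$ the strip becomes a region of width $2$ and length $2/\eps+O(1)$ in the tangential direction, converging as $\eps\to 0$ to the infinite slab $S=\{\tilde x\in\R^2:\abs{\tilde x\cdot n}\le 1\}$. The corrector for such a slab (with conductivity $\kappa$ inside and $1$ outside, applied field $\xi$) is explicit by translation invariance in $\tau$: it reduces to a one-dimensional transmission problem in the $n$ direction. For $\xi=\tau$ the tangential continuity across the two faces is automatic, so the slab corrector vanishes identically; for $\xi=n$, flux continuity $\kappa\,\partial_n V^{\mathrm{in}}=\partial_n V^{\mathrm{out}}$ forces the interior field to be $n/\kappa$, and hence the slab corrector $W^{*}$ satisfies $\nabla W^{*}=(1/\kappa-1)\,n$ inside $S$ and $\nabla W^{*}=0$ outside. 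Averaging over a unit tangential cell of $S$ then yields the limiting values $0$ and $1/\kappa-1$ announced above.

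To justify the limit rigorously, I would construct an explicit ansatz $\tilde W_\eps$ that coincides with the slab corrector $W^{*}$ in a neighborhood of $\omep$ and is smoothly cut off inside $\hat L_0$, using the margin $\delta_0/2$ guaranteed by the construction of $\hat L_0$. Subtracting $\tilde W_\eps$ from $W_\eps$ and running the energy argument from the proof of Lemma~\ref{EnEst} would control $\norm{W_\eps-\tilde W_\eps}_{H^1(\om)}$ by the norm of the residual of the corrector equation, which is supported on the two rounded end caps of $\partial\omep$ and on the cutoff region. The main technical obstacle is to prove that these contributions yield $|\omep|^{-1}\int_{\omep}\nabla(W_\eps-\tilde W_\eps)\cdot\xi\,dx=o(1)$; the crucial observation is that the end caps contribute arc length of order $\eps^2$ against order $\eps$ for the long sides, so their effect is of strictly lower order than $|\omep|=O(\eps^3)$, and the slab approximation becomes asymptotically exact.
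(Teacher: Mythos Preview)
Your slab-limit approach is sound in principle and gives the right intuition, but the paper's proof is considerably more economical and follows a different route.

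For the tangential direction the paper avoids any ansatz or rescaling argument. After reducing to~(\ref{bound5}) with $\xi=\tau=e_1$, it splits $\omep$ into the rectangular core $\omep'=\sigma_\eps+[-\eps^2,\eps^2]\,e_2$ and the two semicircular end caps. On the caps the energy bound $\|\nabla W_\eps^1\|_{L^2}\le C|\omep|^{1/2}$ together with $|\omep\setminus\omep'|=O(\eps^4)$ already gives $o(|\omep|)$. On the core one integrates $\partial_{x_1}W_\eps^1$ exactly by the fundamental theorem of calculus, so the contribution reduces to boundary values of $W_\eps^1$ on the two short sides $\{x_1=\pm\eps\}$ of length $2\eps^2$; the standard $L^\infty$ estimate $\|W_\eps^1\|_{L^\infty}\le C|\omep|^{1/q}$ for any $q>2$ then yields $o(|\omep|)$ once $q<3$. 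This is a two-line argument, with no corrector comparison needed.

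For the normal direction the paper does \emph{not} compute the corrector at all. Once $\mathcal{M}\tau\cdot\tau=1$ is known, the lower bound~(\ref{bound1}) forces $\tau$ to be an eigenvector with eigenvalue~$1$; the trace constraints~(\ref{bound2}) and~(\ref{bound3}) then pin the other eigenvalue to exactly $1/\kappa$, whence $\mathcal{M}n\cdot n=1/\kappa$. Your direct slab computation of the $n$-corrector is correct but unnecessary once this algebraic shortcut is available.

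The trade-off: your route is self-contained and explains the physical origin of both values, but the error analysis you sketch (cutting off the slab corrector, controlling the residual near the end caps) is genuinely delicate and you have only outlined it. The paper's route sidesteps that analysis entirely at the price of invoking the Capdeboscq--Vogelius trace bounds, which were already quoted in~(\ref{bound2})--(\ref{bound3}).
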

\begin{proof}[Proof of Proposition \ref{eigenvalue}]
  Without loss of generality we may assume that $\tau = e_1 = (1,0)$
  and $n = e_2 = (0,1)$ are the standard basis vectors in $\R^2$.
  
  Let us set $\xi = \tau = e_1$ and denote by $W_{\eps}^1$ the corresponding solution of
  (\ref{corr}). We will first show that
\begin{equation}\label{eig1}
|\omep|^{-1}\int_{\omep}\nabla W^1_{\eps}\cdot e_1\, dx=o(1).
\end{equation}
Let $\omep'=\{ x+\rho e_2: x\in\sigma_{\eps}, \, -\eps^2\leq\rho\leq\eps^2\}$ and let us write
\[
\int_{\omep}\nabla W^1_{\eps}\cdot e_1\, dx=\int_{\omep'}\nabla W^1_{\eps}\cdot e_1\, dx+\int_{\omep\backslash\omep'}\nabla W^1_{\eps}\cdot e_1\, dx:=I_1+I_2\,.
\]
Observe that
\[
|I_2|\leq \|\nabla W^1_{\eps}\|_{L^2(\om)}|\omep\backslash\omep'|^{1/2}\,,
\]
and by the energy estimates 
\[
|I_2|\leq |\omep|^{1/2}|\omep\backslash\omep'|^{1/2}=o(|\omep|)\,.
\]
Let us now estimate $I_1$
\[
|I_1|=\left|\int_{-\eps^2}^{\eps^2}\int_{-\eps}^{\eps}\frac{\partial W^1_{\eps}}{\partial x_1}\, dx_1 dx_2\right|=\left|\int_{-\eps^2}^{\eps^2}W^1_{\eps}|_{-\eps}^{\eps}\, dx_2\right|\,.
\]
Observe now that by standard regularity results
\[
\|W^1_{\eps}\|_{L^{\infty}(\omep)}\leq C(\|W^1_{\eps}\|_{H^1(\om)}+\|1_{\omep}e_1\|_{L^q(\omep)})
\]
for $q>2$. Hence
\[
\|W^1_{\eps}\|_{L^{\infty}(\omep)}\leq C |\omep|^{1/q}
\]
and if $q\in(2, 3)$ we get
\[
|I_1|=o(|\omep|).
\]
Summarizing
\[
\left|\int_{\omep}\nabla W^1_{\eps}\cdot e_1\, dx\right|\leq |I_1|+|I_2|=o(|\omep|)\,,
\]
which proves (\ref{eig1}). Finally, inserting (\ref{eig1}) in (\ref{bound5}) and letting $\eps\rightarrow 0$, 
\[
\mathcal{M}e_1\cdot e_1=1.
\]
Recalling (\ref{bound2}) and (\ref{bound3}) we get that 
\[
\mathcal{M}e_2\cdot e_2=\frac{1}{\kappa}. 
\]
\end{proof}
We are now ready to prove our main result:
\begin{proof}[Proof of Theorem \ref{thm:Gasymptotic}]
Let $\omep'$ be defined as in the proof of Proposition \ref{eigenvalue}. Then, 
by Lemma \ref{Loc}, we can write
\[
\begin{aligned}
\mathcal{J}(\uep, v_{\eps})-\mathcal{J}(u,v)
&=\frac{\alpha(\kappa-1)}{2}\int_{\omep}\nabla \uep\cdot\nabla u\, dx\\
&=\frac{\alpha(\kappa-1)}{2}\int_{\omep'}\nabla \uep\cdot\nabla u\, dx+\frac{\alpha(\kappa-1)}{2}\int_{\omep\backslash\omep'}\nabla \uep\cdot\nabla u\, dx\,.
\end{aligned}
\]
Observe now that
\[
\int_{\omep\backslash\omep'}\nabla \uep\cdot\nabla u\, dx=\int_{\omep\backslash\omep'}\nabla (\uep-u)\cdot\nabla u\, dx+\int_{\omep\backslash\omep'}|\nabla u|^2\, dx\,.
\]
Using Schwarz inequality, the regularity estimates of Lemma \ref{Reg} and Lemma \ref{EnEst} we get
\[
\left|\int_{\omep\backslash\omep'}\nabla \uep\cdot\nabla u\, dx\right|
\leq C(\|\uep-u\|_{H^1(\om)}|\omep\backslash\omep'|^{1/2}+ |\omep\backslash\omep'|)\leq C|\omep\backslash\omep'|=o(|\omep|)\,.
\]
Hence 
\[
\mathcal{J}(\uep, v_{\eps})-\mathcal{J}(u,v)=\frac{\alpha(\kappa-1)}{2}\int_{\omep}\nabla \uep\cdot\nabla u\, dx=\frac{\alpha(\kappa-1)}{2}\int_{\omep'}\nabla \uep\cdot\nabla u\, dx+o(|\omep|).
\]
Let us choose
some vector function $\Phi\in C_0^0(\Omega; \mathbb{R}^2)$ such that 
\begin{equation}
\Phi(x)=\begin{cases}
\nabla u& x\in L_0,\\
0& x\in\Omega\setminus \hat L_0.\end{cases}
\end{equation}
Then, from Lemma \ref{nu} we get
\[
|\omep'|^{-1}\int_{\omep'}\nabla u_{\eps}\cdot\nabla u\,dx\rightarrow
\mathcal{M}\nabla u(y)\cdot\nabla u(y)
\]
as $\eps\rightarrow 0$.
Observing that $|\omep'|=4\eps^3$ we get
\[
\mathcal{J}(\uep, v_{\eps})-\mathcal{J}(u,v)=\frac{\alpha(\kappa-1)}{2}4\eps^3\mathcal{M}\nabla u (y)\cdot\nabla u(y)+o(\eps^3)\,.
\]
Finally, observing that the remainder term is uniformly bounded  with respect to $y\in L_0$, i.e., $|o(\eps^3)|\leq C\eps^{3+\eta}$ for some $\eta>0$ and $C$ and $\eta$ depend only on $\kappa, \delta_0, \|f\|_{H^{-1}(\om)}, \|f\|_{L^{\infty}(\om)}$ and $\nabla u$  is continuous on the compact set $L_0$ the claim follows.
\end{proof}

\section{Numerical Implementation}\label{sec_numerics}

We now propose two variants of an algorithm to edges detection that is based on the
expansion derived above, which states that
\begin{equation}\label{eq:approx_num}
{\mathcal J}_{\eps}(u_{\eps},v_{K \cup \omega_\eps}) - {\mathcal J}_{\eps}(u,v_K) 
\approx 2\beta\eps - 2\alpha(1-\kappa)\eps^3\mathcal{M}\nabla u(y)\cdot\nabla u(y)
\end{equation}
with $\mathcal{M} =\frac{1}{\kappa}n\otimes n+\tau\otimes\tau$.
For fixed $y \in \Omega$, the right hand side in~\eqref{eq:approx_num} is minimal 
for $\tau$ equal to the unit vector perpendicular to $\nabla u(y)$
in which case
\[
\mathcal{M}\nabla u(y)\cdot\nabla u(y) = \frac{1}{\kappa} \abs{\nabla u(y)}^2
\]
and
\[
{\mathcal J}_{\eps}(u_{\eps},v_{K \cup \omega_\eps}) - {\mathcal J}_{\eps}(u,v_K) 
\approx 2\beta\eps - 2\alpha\eps^3\frac{1-\kappa}{\kappa}\abs{\nabla u(y)}^2.
\]
As a consequence, we can expect a decrease of the function $\mathcal{J}_{\eps}$
in case
\[
\abs{\nabla u(y)}^2 \ge \frac{\beta\kappa}{\alpha\eps^2(1-\kappa)}
\]
and the decrease is maximal at points $y$ where the gradient of $u$
is maximal.

\begin{enumerate}
\item
Our first algorithm computes a smoothed version $u_s$ of the
input image $f$ a-priori, and then finds, using only the smoothed
image $u_s$, a sequence of edge indicators $K^{(k)}$,
where $K^{(k+1)}$ is formed from $K^{(k)}$ by the addition
of a strip $\sigma_\eps(x^{(k)},\tau^{(k)})$ for
which the expected decrease in the approximated functional 
${\mathcal J}_{\eps}$ from \eqref{Jeps} is maximal.
Theorem~\ref{thm:Gasymptotic} indicates that, as long as we only add strips
that are away from $K^{(k)}$, this is the case if
$x^{(k)}$ is chosen such that $\abs{\nabla u(x)}$ is maximal
and $\tau^{(k)} = (\nabla u(x^{(k)}))^\perp$.
However, because the asymptotic expansion of Theorem~\ref{thm:Gasymptotic} is only valid
away from $K^{(k)}$, we have to restrict the search for a maximum
of $\abs{\nabla u}$ to some set $L^{(k)}$ which is compactly contained
in $\Omega\setminus K^{(k)}$.
For instance, one can set $L^{(k)} := \Omega \setminus (\partial\Omega\cup K^{(k)}+B_\delta)$
for some $\delta > 0$;
a different construction, which we have used in the numerical
examples, is described below.
The iteration is stopped when the expected decrease of the 
gradient term in the functional is compensated by the increase in the edge term.
This is the case when $\abs{\nabla u(x^{(k)})}^2 <
\frac{\beta\kappa}{\alpha\eps^2(1-\kappa)}$.
This method is summarized in Algorithm~\ref{alg_no_update}.

\begin{algorithm}[ht]
  \SetLine
  \KwData{input image $f \colon \Omega \to \R$, parameters $\alpha$, $\beta > 0$,
    $\eps > 0$, $0 < \kappa < 1$\;}
  \KwResult{edge indicator set $K$\;}
  \BlankLine
  \SetKwInput{Initialize}{Initialization}
  \Initialize{set $K = \emptyset$ and $L = \Omega\setminus(\partial\Omega+B_\delta)$}
  \BlankLine
  compute the solution $u$ of
  \[
  \begin{cases}
    u-\alpha\divg(\nabla u) = f &\text{in }\Omega,\\
    \partial_\nu u = 0 & \text{on } \partial\Omega.
  \end{cases}
  \]
  \Repeat{$\abs{\nabla u(x^*)}^2 < \frac{\beta\kappa}{\alpha\eps^2(1-\kappa)}$}{
    find $x^* \in L$ with $\abs{\nabla u(x^*)}$ maximal\;
    compute a strip $\sigma_\eps$ of size $\eps$ centered at $x^*$ with normal $\nabla u(x^*)$\;
    set $K \leftarrow K \cup \sigma_\eps$\;
    compute an enlargement $S$ of $\sigma_\eps$\;
    set $L \leftarrow L \setminus S$\;
  }
  \caption{Implementation without updates of the smoothed
  function}\label{alg_no_update}
\end{algorithm}
In fact this algorithm is an anisotropic edge detector, which take
into account edge magnitudes and local edge orientations. 

\item In our second algorithm, we combine updates of the edge
  indicator with updates of the smoothed function $u$: After adding a
  fixed number $n_{\max}$ of strips to the edge set $K$, we define 
  the new diffusivity $v$ by
  \[
  v(x) :=
  \begin{cases}
    \kappa &\text{if } x \in K,\\
    1 &\text{if } x \not\in K,
  \end{cases}
  \]
  and then compute a corresponding smoothed function $u$, which is
  then used for selecting the next at most $n_{\max}$ strips in the
  edge set.
  The process of alternating between the addition of strips and
  updates of the smoothed function $u$ is repeated until no more
  admissible points $x \in L$ exist for which $\abs{\nabla u(x)}^2 \ge
  \frac{\beta\kappa}{\alpha\eps^2(1-\kappa)}$.
  
  The rationale behind this idea is the fact that the expansion
  derived above, though still valid, becomes 
  increasingly inaccurate as the number of added strips becomes larger.
  Therefore, at some point some reinitialization is necessary.
  Note, however, that the number $n_{\max}$ of strips that are added
  in each iteration mainly determines the computation time, as the
  computation of $u$ is the most costly part of the algorithm. Thus
  the number $n_{\max}$ should not be chosen too small. In the
  numerical implementations, we chose $n_{\max}$ in such a way that
  approximately 10 computations of $u$ were needed.

  The resulting method is described in Algorithm~\ref{alg_update}.

  \begin{algorithm}[ht]
    \SetLine
    \KwData{input image $f \colon \Omega \to \R$, parameters $\alpha$, $\beta > 0$,
      $\eps > 0$, $0 < \kappa < 1$, $n_{\max} \in \N$\;}
    \KwResult{edge indicator function $v$, smoothed image $u$\;}
    \BlankLine
    \SetKwInput{Initialize}{Initialization}
    \Initialize{set $v(x) = 1$ for $x \in \Omega$,
      $K = \emptyset$, and $L = \Omega\setminus(\partial\Omega+B_\delta)$\;}
    \BlankLine
    compute the solution $u$ of
    \[
    \begin{cases}
      u-\alpha\divg(\nabla u) = f &\text{in }\Omega,\\
      \partial_\nu u = 0 & \text{on } \partial\Omega;
    \end{cases}
    \]
    \Repeat{$\max_{x^*\in L}\abs{\nabla u(x^*)}^2 < \frac{\beta\kappa}{\alpha\eps^2(1-\kappa)}$}{
      set $n = 1$\;
      \Repeat{$n > n_{\max}$ or $\abs{\nabla u(x^*)}^2 <
        \frac{\beta\kappa}{\alpha\eps^2(1-\kappa)}$}{
        find $x^* \in L$ with $\abs{\nabla u(x^*)}$ maximal\;
        compute a strip $\sigma_\eps$ of size $\eps$ centered at $x^*$ with normal $\nabla u(x^*)$\;
        set $K \leftarrow K \cup \sigma_\eps$\;
        compute an enlargement $S$ of $\sigma_\eps$\;
        set $L \leftarrow L \setminus S$\;
        set $n \leftarrow n+1$\;
      }
      set $v(x) = \kappa$ for $x \in K$\;
      compute the solution $u$ of
      \[
      \begin{cases}
        u-\alpha\divg(v\nabla u) = f &\text{in }\Omega,\\
        \partial_\nu u = 0 & \text{on } \partial\Omega;
      \end{cases}
      \]
    }
    \caption{Implementation with updates of $u$}\label{alg_update}
  \end{algorithm}
\end{enumerate}

\subsection*{Solution of the PDE}

For the numerical solution of the equation
\[
\begin{cases}
  u-\alpha\divg(v\nabla u) = f &\text{in }\Omega,\\
  \partial_\nu u = 0 & \text{on } \partial\Omega,
\end{cases}
\]
we have implemented a finite element method using bilinear ansatz
functions on a rectangular grid for $u$ and piecewise constant
ansatz functions on the same grid for the diffusivity $v$.
The solution of the resulting linear equation was computed with the CG
method.

\subsection*{Update of the Edge Indicator}

For updating the edge indicator set $K$ (and the function $v$),
we have to find maximizers of $\abs{\nabla u}$.
We restrict the search to midpoints of the rectangular elements $E_k$ of the finite elements and evaluate 
the gradient on the elements analytically.

\begin{figure}
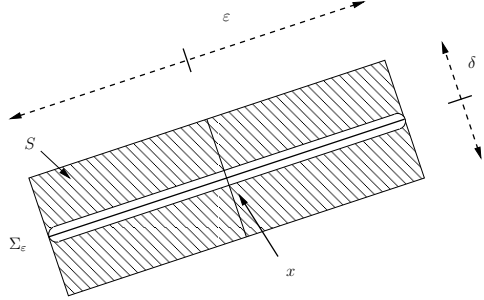

\[
\resizebox{0.5\textwidth}{!}{
\input ./Sigma_eps.tex}
\]
\caption{\label{fig:S}
  Sketch of the construction of the set $\sigma_\eps$
  and the corresponding enlarged set $S$}
\end{figure}

Assume now that the maximum of $\abs{\nabla u}$ is attained at $y^*$.
For the update of the set $L$ we define the enlargement $S$ of
$\Sigma_\eps$ as a rectangle of side-lengths $2\eps$ and $2\delta$ for
some $0 < \delta < \eps$ around the center-line of the strip. That is,
(see Figure~\ref{fig:S})
\[
S := \set{y : \dist(y_k,\sigma_\eps) \le \delta \text{ and }
  \abs{(y-y_k)\cdot \nabla u(y_k)^\perp}\le \eps\abs{\nabla u(y_k)}}
\]
for some $\delta > 0$.
In the numerical experiments below we have chosen $\eps = 3h$ and
$\delta = 2h$ with $h$ being the pixel distance.

\subsection*{Numerical Experiments}

We have tested the two algorithms proposed above using the
\emph{Parrots} image (see Figure~\ref{fi:parrot}, upper left). In
addition, we provide a comparison with the results of the algorithm
proposed in~\cite{GraMusSch11_report}, where balls instead of strips
are used for covering the edge set.

\begin{figure}
  \[
  \begin{aligned}
    &\includegraphics[width=0.45\textwidth]{./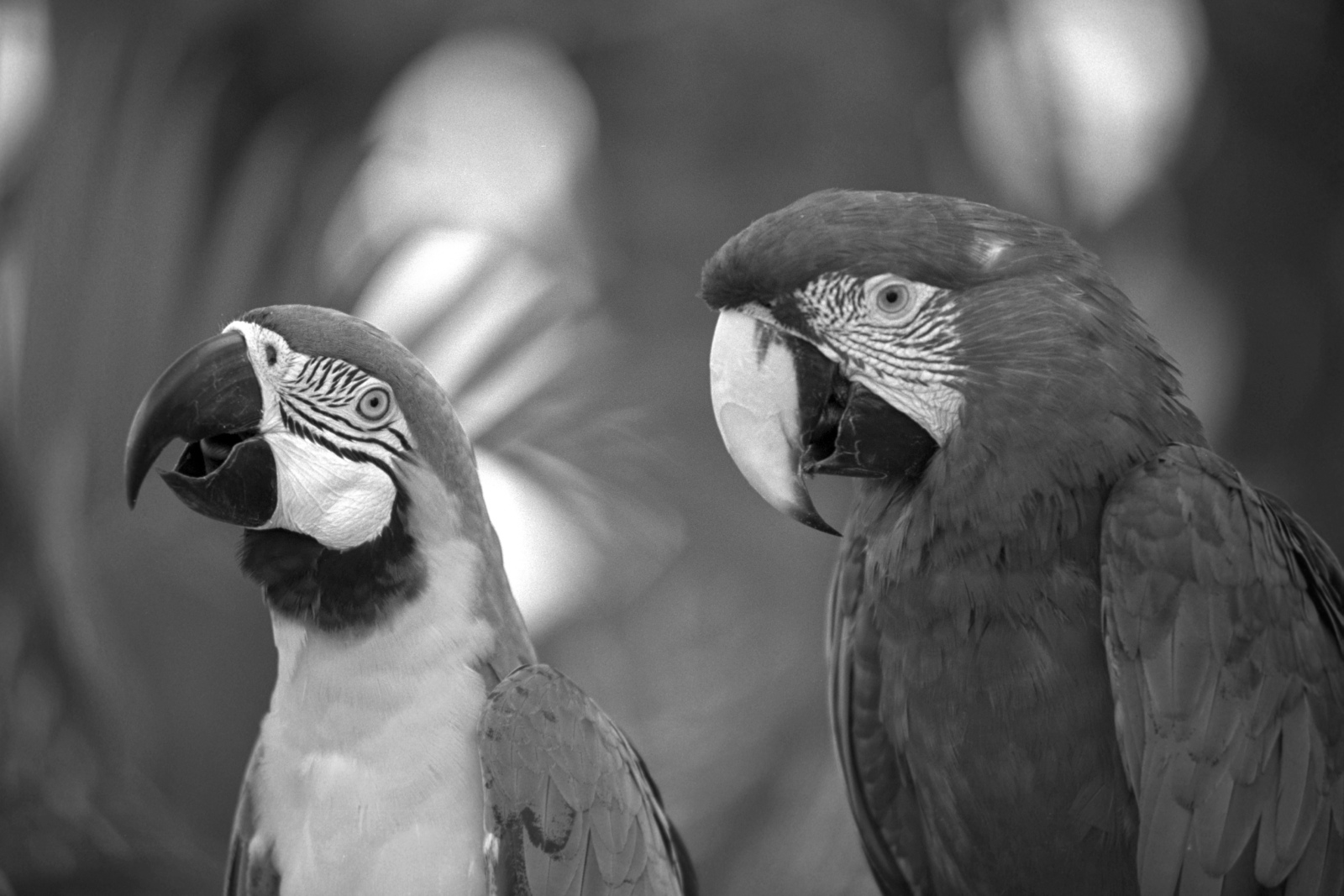}
    &\includegraphics[width=0.45\textwidth]{./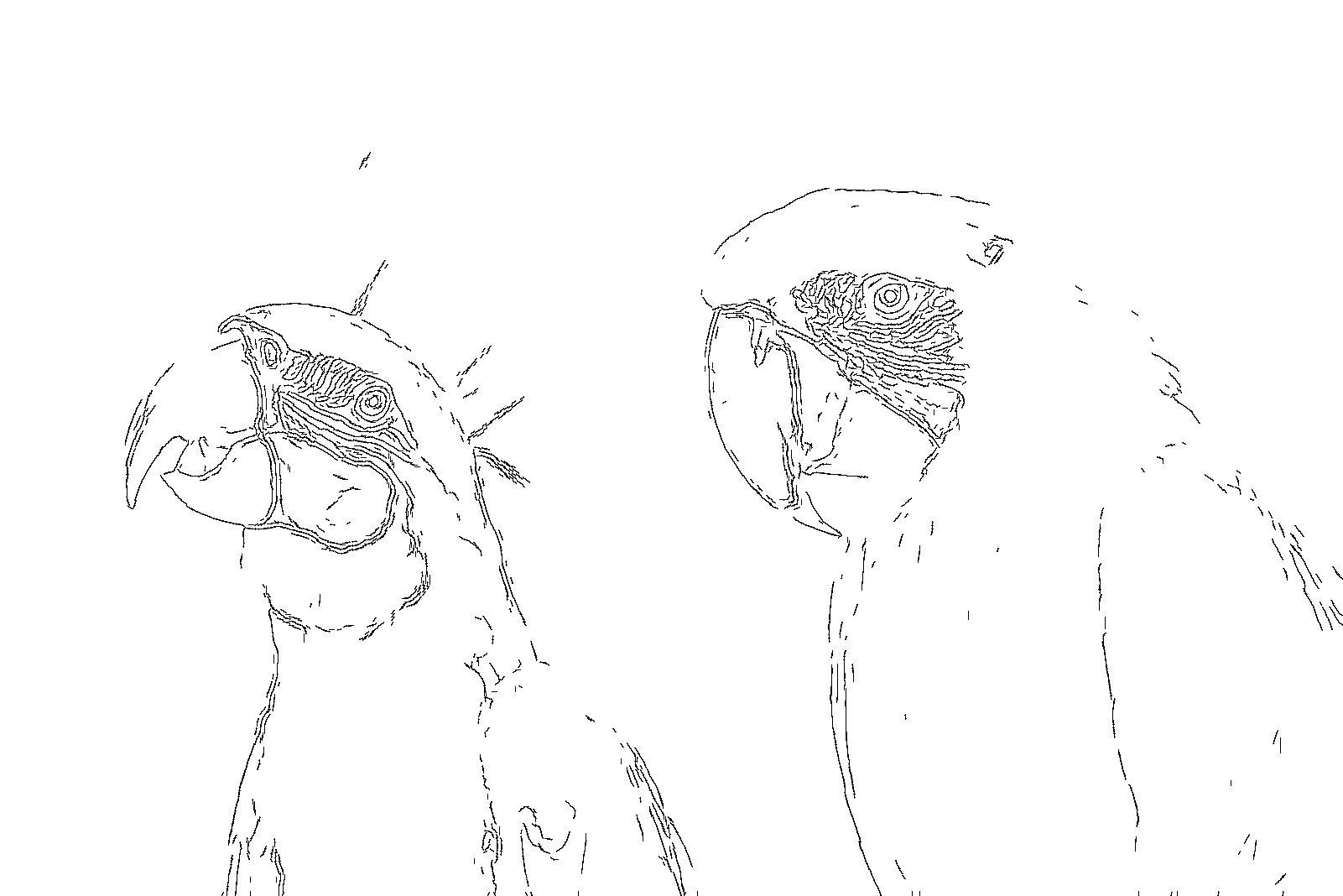}\\
    &\includegraphics[width=0.45\textwidth]{./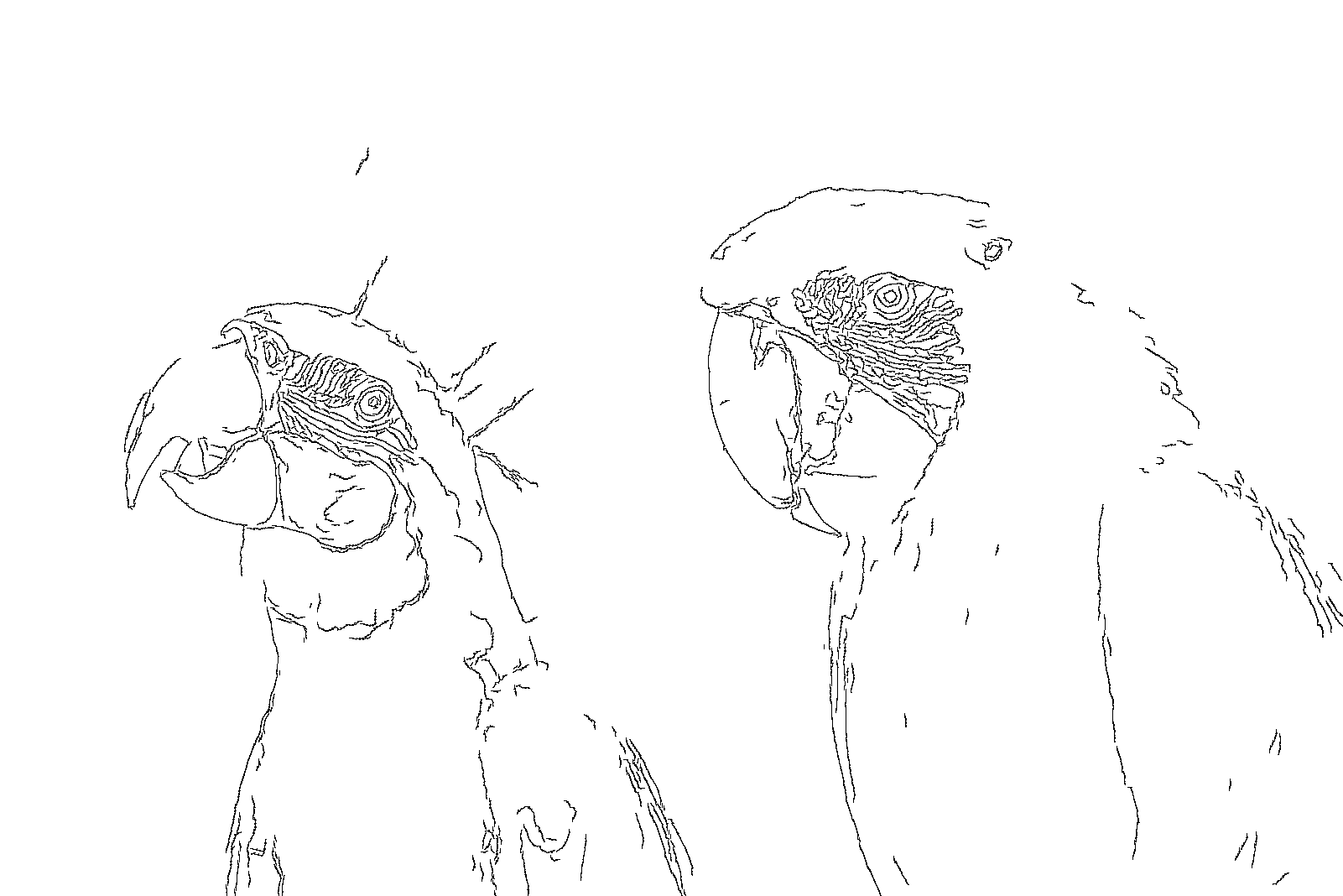}
    &\includegraphics[width=0.45\textwidth]{./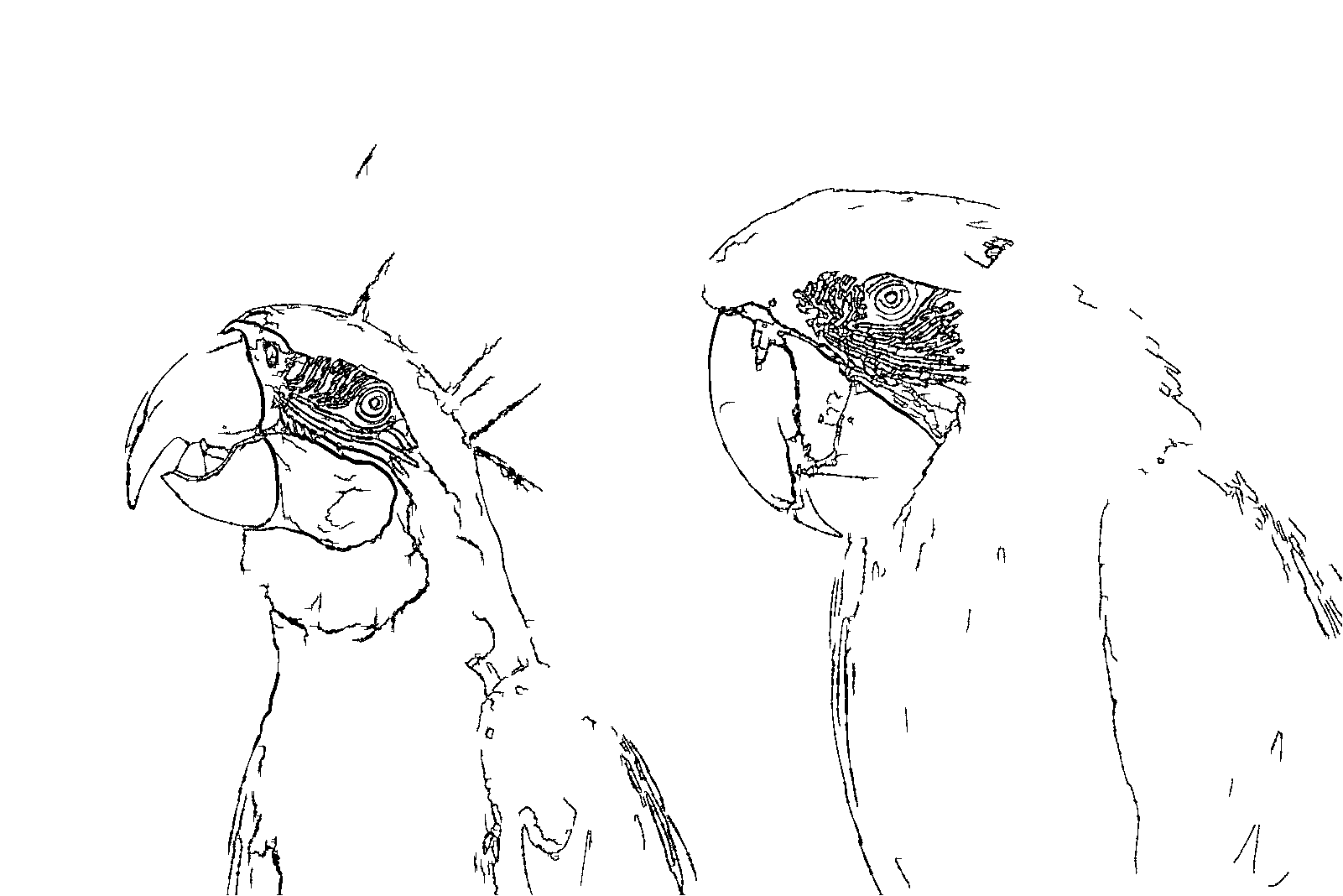}\\
  \end{aligned}
  \]
  \caption{\label{fi:parrot}
    \emph{Upper left:} Original Image. \emph{Upper right:}
    Result with Algorithm~\ref{alg_no_update}.
    \emph{Lower left:} Result with Algorithm~\ref{alg_update}.
    \emph{Lower right:} Result using the Algorithm
    from~\cite{GraMusSch11_report}.
    In all examples, the parameters were $\alpha = 8$ and $\beta = 150$.
    }
\end{figure}

\begin{figure}
  \[
  \begin{aligned}
    &\includegraphics[width=0.3\textwidth]{./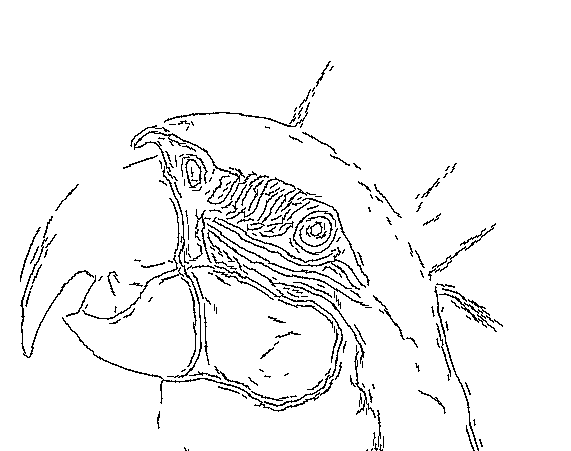}&
    &\includegraphics[width=0.3\textwidth]{./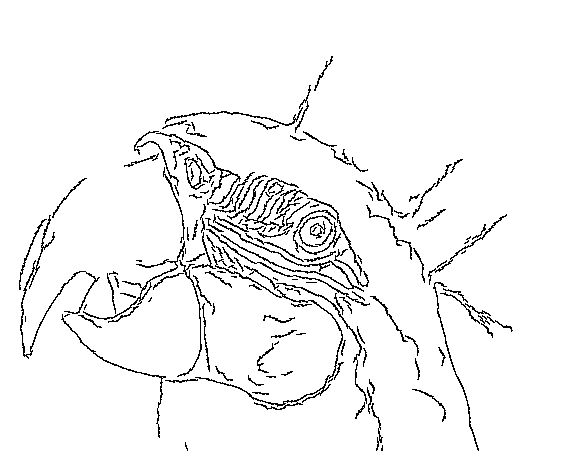}&
    &\includegraphics[width=0.3\textwidth]{./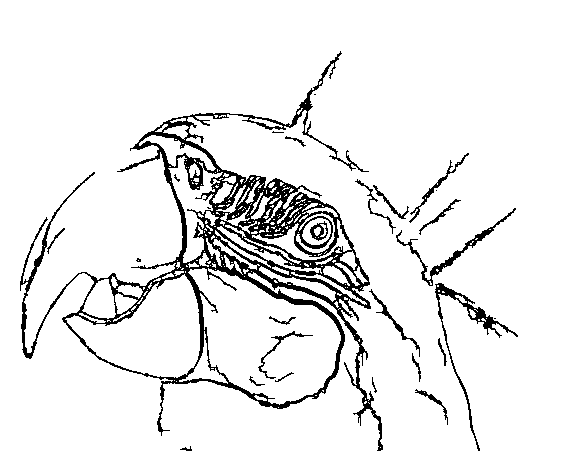}
  \end{aligned}
  \]
  \caption{\label{fi:parrot_detail}
    Close up view on a detail of Figure~\ref{fi:parrot}.
    \emph{Left:} Result with Algorithm~\ref{alg_no_update}.
    \emph{Middle:} Result with Algorithm~\ref{alg_update}.
    \emph{Right:} Result with the method
    from~\cite{GraMusSch11_report}.
    Note in particular the thick edges in the last image.
  }
\end{figure}

\begin{figure}
  \[
  \begin{aligned}
    &\includegraphics[width=0.2\textwidth]{./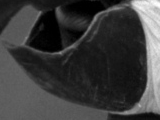}&
    &\includegraphics[width=0.2\textwidth]{./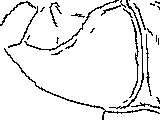}&
    &\includegraphics[width=0.2\textwidth]{./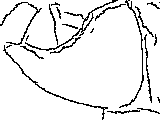}&
    &\includegraphics[width=0.2\textwidth]{./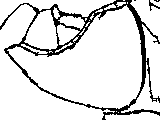}
  \end{aligned}
  \]
  \caption{\label{fi:parrot_beak}
    Close up view of the lower part of the beak of the first parrot in
    Figure~\ref{fi:parrot}.
    \emph{First:} Original image.
    \emph{Second:} Result with Algorithm~\ref{alg_no_update}.
    \emph{Third:} Result with Algorithm~\ref{alg_update}.
    \emph{Fourth:} Result with the method
    from~\cite{GraMusSch11_report}.
    One clearly sees the spurious edges in the first segmentation and 
    the thick edge in the third segmentation, which is partly resolved
    in the second one.
  }
\end{figure}

While, generally speaking, the positions of the detected edges do
approximately agree for the different algorithms, the actual form of
the edges may significantly differ. Thus the algorithm
of~\cite{GraMusSch11_report} results in thick edges, which do not
appear in the results from the strip based methods. This difference
is due to the fact that the directional information present in the
strips allows the exclusion of laterally neighboring pixels from
further considerations. In contrast, the balls that are used for edge
covering in~\cite{GraMusSch11_report} do not allow a similar exclusion
of pixels.

Concerning computation times, Algorithm~\ref{alg_no_update} is clearly
faster than Algorithm~\ref{alg_update}, as the main computational
effort of the methods lies in the solution of the PDE, which has to be
computed several times in the case of Algorithm~\ref{alg_update}. We
do note, however, that Algorithm~\ref{alg_no_update} introduces
artifacts in the form of parallel edges, which can be clearly seen in
the close up view of the parrot's head in
Figure~\ref{fi:parrot_detail} and~\ref{fi:parrot_beak}.
These false edge detections can be
attributed to the smearing out of edges occurring in the first
solution of the PDE.

\section*{Acknowledgments}
This work has been supported by the Austrian Science Fund (FWF) within the 
national research network Photoacoustic Imaging in Biology and Medicine, project S10505-N20, and Variational Methods on Manifolds, project S11704.
OS wants to thank Yves Capdeboscq for stimulating discussions. The work was initiated during a special semester at MSRI in 2010 and finished 
during a special semester at Mittag-Leffler in 2013; the hospitality of MSRI and Mittag-Leffler is gratefully acknowledged.

\section*{Appendix --- $\Gamma$-convergence}

In the following we show, similarly as in~\cite{GraMusSch11_report},
that the functional $\mathcal{J}_{\eps}$ defined in (\ref{Jeps})
$\Gamma$-converges as $\eps\to 0$ and $\kappa \to 0$ 
to the Mumford-Shah functional $\mathcal{F}\colon L^2(\Omega)\times L^2(\Omega) \to [0,+\infty]$, defined by
\begin{equation}
 \label{eq:ms2}
\mathcal{F}(u,v) = \begin{cases}
{\displaystyle
\frac{1}{2} \int_\Omega (u-f)^2\,dx 
+ \frac{\alpha}{2}\int_{\Omega\setminus S_u} \abs{\nabla u}^2\,dx + \beta\mathcal{H}^1(S_u)}
&\text{ if } v \equiv 1\,,\\
+ \infty &\text{ else.}
\end{cases}
\end{equation}
Here $S_u$ denotes the discontinuity set of the function $u$ (see \cite{Bra98}).

We do stress that, in contrast to the rest of the paper, where
$\kappa$ was constant, it is necessary for obtaining any non-trivial
$\Gamma$-convergence result that this parameter tends to zero much
faster than the size $\eps$ of the covering strips. Thus the following
theorem can be interpreted as saying that the minimizers of
$\mathcal{J}_\eps$ are close to minimizers of the Mumford--Shah
function, if both parameters $\eps$ and $\kappa$ are close to
zero. The asymptotic expansion derived in
Theorem~\ref{thm:Gasymptotic}, however, relies on $\kappa$ being
bounded away from zero; the constant in the $O(\eps^3)$ expansion
tends to $+\infty$ as $\kappa \to 0$.

\begin{theoremnn}
\label{th:gamma_convergence}
  Assume that $\kappa(\eps) = o(\eps^2)$ as $\eps \to 0$.
  Then, 
  \[
  \mathcal{F} = \Gammalim_{\eps\to 0}\mathcal{J}_{\eps}\;.
  \]
\end{theoremnn}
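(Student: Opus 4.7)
The proof reduces to establishing the $\Gamma$-liminf and $\Gamma$-limsup inequalities at each $(u,v)\in L^2(\om)\times L^2(\om)$, with respect to the $L^2\times L^2$ topology. A preliminary reduction is immediate: if $(u_\eps,v_\eps)\to(u,v)$ and $\liminf_\eps\mathcal J_\eps(u_\eps,v_\eps)<+\infty$, the edge term forces $m_\eps(v_\eps)\le C/\eps$, so $K_\eps:=\set{v_\eps=\kappa}$ has Lebesgue measure $m_\eps(v_\eps)\abs{\omep}\le C'\eps^2$. Because $v_\eps\in\set{\kappa,1}$ a.e.\ and $\kappa\to 0$, every $L^2$-limit $v$ must equal $1$ identically, so it suffices to work at $(u,1)$; otherwise $\mathcal F(u,v)=+\infty$ and the inequalities are trivial.

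For the $\Gamma$-limsup at $(u,1)$ with $\mathcal F(u,1)<+\infty$, I would first invoke a Cortesani--Toader-type density argument to reduce to the case where $u\in\SBV(\om)$ has jump set $S_u$ consisting of finitely many $C^1$ arcs bounded away from $\partial\om$. Cover $S_u$ by $N_\eps\sim\mathcal H^1(S_u)/(2\eps)$ consecutive center-segments $\sigma_\eps(y_i,\tau_i)$ with $\tau_i$ tangent to $S_u$ at $y_i$, set $K_\eps:=\bigcup_i\omega_\eps(y_i,\tau_i)$, and put $v_\eps=\kappa$ on $K_\eps$, $v_\eps=1$ elsewhere. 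Define $u_\eps$ to be $u$ outside the strips and to interpolate linearly across the thin direction (of width $2\eps^2$) between the one-sided traces of $u$ on each strip. Then $u_\eps\to u$ in $L^2$, the fidelity term passes to the limit, and the bulk-gradient contribution converges by dominated convergence to $\int_{\om\setminus S_u}\abs{\nabla u}^2\,dx$. The crucial strip contribution is estimated as
\[
\kappa\int_{K_\eps}\abs{\nabla u_\eps}^2\,dx\le C\kappa\,N_\eps\cdot\eps^3\cdot\frac{J_{\max}^2}{\eps^4}\le C\,\frac{\kappa}{\eps^2}\mathcal H^1(S_u)\,J_{\max}^2\longrightarrow 0,
\]
which is precisely where the hypothesis $\kappa=o(\eps^2)$ is used; finally $2\beta\eps\,m_\eps(v_\eps)\le 2\beta\eps\,N_\eps\to\beta\mathcal H^1(S_u)$.

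For the $\Gamma$-liminf, take $(u_\eps,v_\eps)\to(u,1)$ with bounded energies; $L^2$ lower semicontinuity handles the fidelity term. Introduce the modification $\tilde u_\eps$ equal to $u_\eps$ on $\om\setminus K_\eps$ and extended by the local mean on each strip, so that $\tilde u_\eps\in\SBV(\om)$ with $S_{\tilde u_\eps}\subset\partial K_\eps$ and $\int_\om\abs{\nabla\tilde u_\eps}^2\,dx$ bounded by $2\mathcal J_\eps/\alpha$. Since $\abs{K_\eps}\to 0$, also $\tilde u_\eps\to u$ in $L^2$, and Ambrosio's $\SBV$-compactness theorem yields $u\in\SBV(\om)$ together with the bound $\frac{\alpha}{2}\int_{\om\setminus S_u}\abs{\nabla u}^2\,dx\le\liminf_\eps\frac{\alpha}{2}\int_\om v_\eps\abs{\nabla u_\eps}^2\,dx$. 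For the edge term I would use a Hausdorff-premeasure estimate: each strip has diameter at most $2\eps+2\eps^2$, hence $\mathcal H^1_\delta(K_\eps)\le 2\eps\,m_\eps(v_\eps)(1+O(\eps))$ for $\delta>2\eps+2\eps^2$, and the $\SBV$-compactness argument above identifies the jump set $S_u$ as being $\mathcal H^1$-covered by $\liminf K_\eps$; letting $\eps\to 0$ then gives $\mathcal H^1(S_u)\le\liminf 2\eps\,m_\eps(v_\eps)$.

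The main technical obstacle is obtaining the \emph{sharp} constant in the inequality $\mathcal H^1(S_u)\le\liminf 2\eps\,m_\eps(v_\eps)$, rather than the factor-$2$ loss that a crude perimeter estimate $\mathcal H^1(\partial K_\eps)\approx 4\eps\,m_\eps$ would produce. This forces one to use the Hausdorff-diameter premeasure together with a careful compactness argument proving that $S_u$ is covered by the strip bodies $K_\eps$ themselves and not merely by their (doubled) boundaries; this is precisely the reason $m_\eps$ is defined through center-segments of length $2\eps$ rather than through the perimeters of the surrounding strips, and it is where the shrinking of the strip width $\eps^2$ much faster than its length $\eps$ plays its essential role.
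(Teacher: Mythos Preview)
Your $\Gamma$-limsup argument is essentially the paper's: both reduce via a Cortesani--Toader density step to functions whose jump set is a finite union of straight segments, cover $S_u$ by $N_\eps\sim\mathcal H^1(S_u)/(2\eps)$ strips, and build a recovery sequence whose transversal gradient on the strips blows up like $\eps^{-2}$, so that the strip contribution is $O(\kappa/\eps^2)\to 0$ under the hypothesis $\kappa=o(\eps^2)$. The only cosmetic difference is that the paper takes $u^\eps(x)=u(x)\min\bigl(\dist(x,S_u)/\eps^2,1\bigr)$ instead of your linear interpolation across the thin direction; both constructions give the same scaling.

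For the $\Gamma$-liminf the paper gives no argument at all: it states that the proof ``is along the lines of'' the companion paper on ball coverings and proves only the limsup. Your sketch therefore goes beyond what is written here, and the treatment of the fidelity and gradient terms via truncation and $\SBV$ compactness is reasonable. The edge term, however, has a real gap. The sentence ``the $\SBV$-compactness argument above identifies the jump set $S_u$ as being $\mathcal H^1$-covered by $\liminf K_\eps$'' is not something Ambrosio's theorem yields; what it gives is $\mathcal H^1(S_u)\le\liminf_\eps\mathcal H^1(S_{\tilde u_\eps})$, and with your mean-value extension one has $S_{\tilde u_\eps}\subset\partial K_\eps$, whose length is $\sim 4\eps\,m_\eps(v_\eps)$. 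That lands you exactly on the factor-two loss you yourself flag in the last paragraph, so the Hausdorff-premeasure bound on the \emph{bodies} $K_\eps$ never enters the chain of inequalities. One way to repair the constant is to replace the mean-value extension by an $H^1$ extension of $u_\eps\rvert_{\Omega\setminus K_\eps}$ across each half-strip that jumps only on the centre segment $\sigma_\eps(y_i,\tau_i)$, so that $\mathcal H^1(S_{\tilde u_\eps})\le 2\eps\,m_\eps(v_\eps)$ directly; but this requires a uniform energy bound for the extension, which you have not supplied and which is exactly the work the paper outsources to the reference.
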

\begin{proof}
In order to prove the $\Gamma$-convergence result, we have to show
that
\[
\Gammalimsup_{\eps \to 0} \mathcal{J}_{\eps}
\le \mathcal{F}
\le \Gammaliminf_{\eps \to 0} \mathcal{J}_{\eps}.
\]

The proof of the $\liminf$-inequality is along the lines
of~\cite{GraMusSch11_report}. Therefore we only prove the
$\limsup$-inequality. 

  Following~\cite{GraMusSch11_report}, we introduce the set $\mathcal{W}(\Omega)$
  consisting of all functions $u \in \SBV(\Omega)$ for which the following hold:
  \begin{enumerate}
  \item $\mathcal{H}^{1}(\overline{S}_u\setminus S_u) = 0$.
  \item The set $\overline{S}_u$ is the union of a finite number of 
    almost disjoint line segments contained in $\Omega$, that is,
    their pairwise intersections are either empty or contain a single point.
  \item $u|_{\Omega\setminus \bar{S}_u} \in W^{1,\infty}(\Omega\setminus S_u)$.
  \end{enumerate}
  This set has been shown to be dense in $\SBV(\Omega)$ in the sense that,
  for every $u \in \SBV$, there exists a sequence $(u_j)_{j\in\N} \in \mathcal{W}(\Omega)$
  such that $\norm{u_j-u}_{L^2} \to 0$ and $\mathcal{F}(u_j) \to \mathcal{F}(u)$
  (see~\cite{Cor97,CorToa99}).

  Now assume that $u \in \mathcal{W}(\Omega)$ and $\eps > 0$.
  In the following, we will construct sequences $u^\eps \to u$ and $v^\eps
  \to 1$ such that
  \[
  \mathcal{J}_{\eps}(u^\eps,v^\eps) \to \mathcal{F}(u,1).
  \]
  Because of the aforementioned density of $\mathcal{W}(\Omega)$ and
  the fact that $\mathcal{F}(u,v) = +\infty$ for $v\neq 1$, this
  will prove the $\limsup$-part.

  Because $u \in \mathcal{W}(\Omega)$,
  there exists a finite number $k$ of almost disjoint line segments $[a_i,b_i] \subset \R^2$
  such that $S_u = \bigcup_{i=1}^k [a_i,b_i]$.
  Moreover,
  \[
  \mathcal{H}^1(S_u) = \mathcal{H}^1(\overline{S}_u) = \sum_{i=1}^k \abs{b_i-a_i}\;.
  \]
  Now choose a minimal number of points $y_j^{(i)} \in [a_i,b_i]$, $j = 1,\ldots,l_i$, 
  in such a way that the union of the strips
  $\omega_\eps\bigl(y_j^{(i)},\frac{b_i-a_i}{\abs{b_i-a_i}}\bigr)$ covers the set
  \begin{equation*}
    K_i^{\eps} := \{x \in \R^2 \,:\, \dist(x,[a_i,b_i]) < \eps^2\}\;.
  \end{equation*}
  This can be achieved with at most $1+\frac{\abs{b_i-a_i}}{2\eps}$ points.
  Define 
  \[
  S_\eps := \bigcup_{i=1}^k \bigcup_{j=1}^{l_i}
  \omega_\eps\bigl(y_j^{(i)},\frac{b_i-a_i}{\abs{b_i-a_i}}\bigr)
  \]
  and let $v^{\eps} := v_{S_\eps}$.
  Noting that
  \[
  \mathcal{L}^2(S_\eps)\le \eps^2\bigl((2+2k\eps)\mathcal{H}^1(S_u)+k\pi\eps^2\bigr),
  \]
  we see that $v^\eps \to 1$ as $\eps \to 0$.
  Moreover 
  \[
  m_{\eps}(v^{\eps}) \le \sum_{i=1}^k\Bigl(1+\frac{\abs{b_i-a_i}}{2\eps}\Bigr)
  \le k+\frac{\mathcal{H}^1(S_u)}{2\eps}\,,
  \]
  showing that
  \[
  \limsup_{\eps\to 0} 2\beta \eps m_{\eps}(v^\eps) \le \beta\mathcal{H}^1(S_u)\;.
  \]
  Define moreover 
  \[
  u^{\eps}(x) := u(x) \min\Bigl(\frac{\dist(x,S_u)}{\eps^2},1\Bigr)\;.
  \]
  Then Lebesgue's theorem of dominated convergence implies that
  $u^{\eps} \to u$ in $L^2(\Omega)$, and therefore
  \[
  \int_\Omega (u^\eps-f)^2\,dx \to \int_\Omega (u-f)^2\,dx \text{ as } \eps \to 0\;.
  \]
  Moreover, $\nabla u^{\eps}(x) = \nabla u(x)$ for $x \not\in K^{\eps}:=\{x\in\R^2 \,:\, \dist(x,S_u) < \eps^2\}$,
  and
  \[
  \abs{\nabla u^{\eps}(x)} \le \abs{\nabla u(x)}+\frac{\norm{u}_{L^\infty}}{\eps^2} \text{ for almost every } x \in K^{\eps}\,.
  \]
  This implies that
  \[
  \int_\Omega v^{\eps}\abs{\nabla u^{\eps}}^2\,dx
  \le \int_{\Omega\setminus K^{\eps}} \abs{\nabla u}^2\,dx 
  + 2\kappa(\eps)\int_{K^{\eps}\setminus S_u} \abs{\nabla u}^2+\frac{\norm{u}_{L^\infty}^2}{\eps^4}\,dx\;.
  \]
  Because
  \[
  \mathcal{L}^2(K^\eps)\le \eps^2\bigl(2\mathcal{H}^1(S_u)+k\pi\eps^2\bigr)
  \]
  and $\kappa(\eps) = o(\eps^2)$ as $\eps \to 0$, this shows that
  \[
  \limsup_{\eps \to 0}\int_\Omega v^{\eps}\abs{\nabla u^{\eps}}^2\,dx \le \int_{\Omega\setminus S_u}\abs{\nabla u^{\eps}}^2\,dx\;.
  \]
  Together, these estimates imply that 
  \[
  \limsup_{\eps \to 0} \mathcal{J}_{\eps}(u^\eps,v^\eps) \le \mathcal{F}(u,1)\,,
  \]
  which, because of the density of $\mathcal{W}(\Omega)$, in turn proves
  that
  \[
  \mathcal{F} \ge \Gammalimsup_{\eps\to 0} \mathcal{J}_{\eps}\;.
  \]
\end{proof}

\bibliographystyle{plain}

\begin{thebibliography}{10}

\bibitem{BerCapGouFra09}
E.~Beretta, Y.~Capdeboscq, F.~de~Gournay, and E.~Francini.
\newblock Thin cylindrical conductivity inclusions in a three-dimensional
  domain: a polarization tensor and unique determination from boundary data.
\newblock {\em Inverse Probl.}, 25(6):065004, 22, 2009.

\bibitem{Bra98}
A.~Braides.
\newblock {\em Approximation of free-discontinuity problems}, volume 1694 of
  {\em Lecture Notes in Mathematics}.
\newblock Springer-Verlag, Berlin, 1998.

\bibitem{CapVog03a}
Y.~Capdeboscq and M.S. Vogelius.
\newblock A general representation formula for boundary voltage perturbations
  caused by internal conductivity inhomogeneities of low volume fraction.
\newblock {\em M2AN Math. Model. Numer. Anal.}, 37(1):159--173, 2003.

\bibitem{CapVog06}
Y.~Capdeboscq and M.S. Vogelius.
\newblock Pointwise polarization tensor bounds, and applications to voltage
  perturbations caused by thin inhomogeneities.
\newblock {\em Asymptot. Anal.}, 50(3-4):175--204, 2006.

\bibitem{Cor97}
G.~Cortesani.
\newblock Strong approximation of {GSBV} functions by piecewise smooth
  functions.
\newblock {\em Ann. Univ. Ferrara Sez. VII (N.S.)}, 43:27--49 (1998), 1997.

\bibitem{CorToa99}
G.~Cortesani and R.~Toader.
\newblock A density result in {SBV} with respect to non-isotropic energies.
\newblock {\em Nonlinear Anal.}, 38(5, Ser. B: Real World Appl.):585--604,
  1999.

\bibitem{DonGraKanSch12_report}
G.~Dong, M.~Grasmair, S.~H. Kang, and O.~Scherzer.
\newblock Scale and edge detection with topological derivatives of the
  {M}umford-{S}hah functional.
\newblock In A.~Kuijper, editor, {\em {SSVM'13}: Proceedings of the fourth
  International Conference on Scale Space and Variational Methods in Computer
  Vision}, volume 7893 of {\em Lecture Notes in Computer Science}, pages
  404--415, Berlin, Heidelberg, 2013. Springer-Verlag.

\bibitem{GilTru01}
D.~Gilbarg and N.~S. Trudinger.
\newblock {\em Elliptic Partial Differential Equations of Second Order}.
\newblock Reprint of the 2nd ed. Springer, 2001.

\bibitem{GraMusSch11_report}
M.~Grasmair, M.~Muszkieta, and O.~Scherzer.
\newblock An approach to the minimization of the {M}umford--{S}hah functional
  using {$\Gamma$}-convergence and topological asymptotic expansion.
\newblock To appear in {\em Interfaces Free Bound.}, 2013.

\bibitem{JunKanShe07}
Y.~M. Jung, S.~H. Kang, and J.~Shen.
\newblock Multiphase image segmentation via {M}odica-{M}ortola phase
  transition.
\newblock {\em SIAM J. Appl. Math.}, 67(5):1213--1232, 2007.

\bibitem{LadUra68b}
O.~A. Ladyzhenskaya and N.~N. Ural{\cprime}tseva.
\newblock {\em Linear and quasilinear elliptic equations}.
\newblock Translated from the Russian by Scripta Technica, Inc. Translation
  editor: Leon Ehrenpreis. Academic Press, New York, 1968.

\bibitem{MumSha89}
D.~Mumford and J.~Shah.
\newblock Optimal approximations by piecewise smooth functions and associated
  variational problems.
\newblock {\em Comm. Pure Appl. Math.}, 42(5):577--685, 1989.

\bibitem{VogVol00}
M.~S. Vogelius and D.~Volkov.
\newblock Asymptotic formulas for perturbations in the electromagnetic fields
  due to the presence of inhomogeneities of small diameter.
\newblock {\em M2AN Math. Model. Numer. Anal.}, 34(4):723--748, 2000.

\end{thebibliography}

\def\cprime{$'$}

\end{document}